\providecommand{\keywords}[1]{\textbf{Keywords} #1}
\newtheorem{remark}{Remark}
\newtheorem{result}{Result}
\newtheorem{theorem}{Theorem}
\begin{document}

\title{Subgrid multiscale stabilized finite element analysis of  non-Newtonian Casson model fully coupled with Advection-Diffusion-Reaction equations}

\author{ B.V. Rathish Kumar, Manisha Chowdhury\thanks{ Email addresses: drbvrk11@gmail.com (B.V.R. Kumar); chowdhurymanisha8@gmail.com(M.Chowdhury)} }
      
\date{Indian Institute of Technology Kanpur \\ Kanpur, Uttar Pradesh, India}

\maketitle

\begin{abstract}
In this paper we have studied subgrid multiscale stabilized formulation with dynamic subscales for non-Newtonian Casson fluid flow model tightly coupled with variable coefficients ADR ($VADR$) equation.  The Casson viscosity coefficient is taken to be dependent upon solute mass concentration. This paper presents the stability and convergence analyses of the stabilized finite element solution. The proposed expressions of the stabilization parameters helps in obtaining optimal order of convergences. Appropriate numerical experiments have been provided.
\end{abstract}

\keywords{Non-Newtonian Casson fluid  \& Advection-Diffusion-Reaction equation \&  Subgrid multiscale stabilized method \&  Apriori error estimation \&  Aposteriori error estimation}

\section{Continuous problem}
\subsection{The model problem}
Let $\Omega \subset$ $\mathbb{R}^d$ ($d$=2,3) be an open bounded domain with the boundary $\partial \Omega$. Study of higher dimensional strongly coupled Casson fluid flow-transport model is a quite straight forward extension of its' lesser dimensional version, we have studied here two dimensional coupled system of equations for keeping simplicity in theoretical derivations specially. Now the general governing equations representing incompressible fluid flow phenomena is to find velocity $\textbf{u}=(u_1,u_2)$: $\Omega \times (0,T)$ $\rightarrow \mathbb{R}^2$ and pressure $p$: $\Omega \times (0,T)$ $\rightarrow \mathbb{R}$ such that
\begin{equation}
\begin{split}
\rho \frac{\partial u_1}{\partial t} + \rho (u_1 \frac{\partial u_1}{\partial x_1}+ u_2 \frac{\partial u_1}{\partial x_2}) & = - \frac{\partial p}{\partial x_1} + (\frac{\partial \tau_{11}}{\partial x_1}+ \frac{\partial \tau_{12}}{\partial x_2})+f_1^F \hspace{2mm} in \hspace{2mm} \Omega \times (0,T) \\
\rho \frac{\partial u_2}{\partial t} + \rho (u_1 \frac{\partial u_2}{\partial x}+ u_2 \frac{\partial u_2}{\partial x_2}) & = - \frac{\partial p}{\partial y} + (\frac{\partial \tau_{12}}{\partial x_1}+ \frac{\partial \tau_{22}}{\partial x_2})+f_2^F \hspace{2mm} in \hspace{2mm} \Omega \times (0,T) \\
\frac{\partial u_1}{\partial x_1} + \frac{\partial u_2}{\partial x_2} & = 0 \hspace{2mm} in \hspace{2mm} \Omega \times (0,T) \\
\end{split}
\end{equation}
where $\rho$ is density of the fluid, $f_1^F, f_2^F$ are the body forces and according to the rheological equation of state for an incompressible, isotropic Casson fluid \cite{29} the components of the shear stress tensor are assumed here as follows:
\begin{equation}
\tau_{ij}= 2 \mu(c,J_2) D_{ij}= \mu(c,J_2) (\frac{\partial u_i}{\partial x_j}+\frac{\partial u_j}{\partial x_i}) \hspace{2mm} for \hspace{2mm} i,j=1,2
\end{equation}
Here $D_{ij}$ denotes component of deformation-rate for each $i,j=1,2$ and 
\begin{equation}
\mu(c, J_2) = (\frac{\sqrt{\tau_y}}{\sqrt{2}} + \sqrt{\eta(c)} J_2^{\frac{1}{4}})^2 J_2^{-\frac{1}{2}}
\end{equation}
where $\tau_y$ denotes yield stress, $J_2= \{ 2 (\frac{\partial u_1}{\partial x_1})^2 + 2 (\frac{\partial u_2}{\partial x_2})^2 + (\frac{\partial u_1}{\partial x_2}+ \frac{\partial u_2}{\partial x_1})^2 \}$ and $\eta(c)$ is viscosity coefficient depending upon the concentration of the solute, transportation of which is modeled through the following $VADR$ equation.
\begin{equation}
\frac{\partial c}{\partial t}- \bigtriangledown \cdot \tilde{\bigtriangledown} c + \textbf{u} \cdot \bigtriangledown c + \alpha c  = f^T \hspace{2mm} in \hspace{2mm} \Omega \times (0,T) \\
\end{equation}
where the notation $\tilde{\bigtriangledown}: = (D_1 \frac{\partial}{\partial x}, D_2 \frac{\partial}{\partial y})$ \\
$D_1,D_2$ are the variable coefficients of diffusion, $\alpha$ is reaction coefficient and $f^T$ is the source of solute mass. \vspace{1mm}\\
Let us consider homogeneous Dirichlet boundary conditions for both (1) and (4) and the initial conditions are assumed respectively as follows:
\begin{equation}
\begin{split}
\textbf{u}= \textbf{0}, c & =0 \hspace{2mm} on \hspace{2mm} \partial \Omega \times (0,T)\\
\textbf{u}= \textbf{u}_0, c & = c_0 \hspace{2mm} at \hspace{2mm} t=0
\end{split}
\end{equation}
This coupled system can be expressed in the following operator form
\begin{equation}
M \partial_t \textbf{U} + \mathcal{L}(\textbf{u}, \mu(c,J_2); \textbf{U})= \textbf{F}
\end{equation}
where $\textbf{U}$ denotes the triplet $(\textbf{u},p,c)$ and $M$, a matrix= $diag(\rho,\rho,0,1)$, $\partial_t \textbf{U}=[\frac{\partial \textbf{u}}{\partial t},\frac{\partial p}{\partial t} \frac{\partial c}{\partial t}]^T$ and $\textbf{F}=[f_1^F,f_2^F,0,f^T]^T$\\

\[
\mathcal{L} (\textbf{u},\mu; \textbf{U})=
  \begin{bmatrix}
  \rho(u_1 \frac{\partial u_1}{\partial x}+ u_2 \frac{\partial u_1}{\partial y}) + \frac{\partial p}{\partial x} - \{ \frac{\partial}{\partial x} (2 \mu \frac{\partial u_1}{\partial x})+  \frac{\partial}{\partial y} ( \mu (\frac{\partial u_2}{\partial x} + \frac{\partial u_1}{\partial y})\} \\
  \rho(u_1 \frac{\partial u_2}{\partial x}+ u_2 \frac{\partial u_2}{\partial y}) + \frac{\partial p}{\partial y} - \{ \frac{\partial}{\partial x} ( \mu (\frac{\partial u_2}{\partial x} + \frac{\partial u_1}{\partial y})+  \frac{\partial}{\partial y} (2 \mu \frac{\partial u_2}{\partial y})\} \\
    \bigtriangledown \cdot \textbf{u} \\
    - \bigtriangledown \cdot \tilde{\bigtriangledown} c + \textbf{u} \cdot \bigtriangledown c + \alpha c 
  \end{bmatrix}
\]
where the notation $\mu$ denotes abbreviated form of  non-linear $\mu(c,J_2)$. Now let us make few suitable assumptions on coefficients of the coupled system in the following:\vspace{1mm}\\
\textbf{(i)} The fluid viscosity $\eta(c) \in C^0(\mathbb{R}^+; \mathbb{R}^+)$, the space of positive real valued functions defined on positive real numbers and hence there exists $\eta_l$ and $\eta_u$ such that 
\begin{equation}
0 < \eta_l \leq \eta(x) \leq \eta_u \hspace{2mm} for \hspace{2mm} any \hspace{2mm} x\in \mathbb{R}^+
\end{equation}
\textbf{(ii)} $D_1= D_1((x,y),t) \in C^0(\mathbb{R}^2 \times [0,T];\mathbb{R})$ and $D_2= D_2((x,y),t) \in C^0(\mathbb{R}^2 \times [0,T];\mathbb{R})$ where $ C^0(\mathbb{R}^2\times [0,T];\mathbb{R})$ is the space of real valued continuous function defined on $\mathbb{R}^2$ for fixed $t \in [0,T]$. \vspace{2mm} \\
\textbf{(iii)} $\rho$, $\tau_y$ and $\alpha$ are positive constants. \vspace{2mm}\\
\textbf{(iv)}  The spaces of continuous solution $(\textbf{u},p,c)$ are assumed as: \\ $\textbf{u} \in L^{\infty}(0,T; (H^2(\Omega))^2)\bigcap C^{0}(0,T; (H_0^1(\Omega))^2)$ and $\textbf{u}_{tt} \in C^{0}(0,T; (H^1(\Omega))^2)$ \\
$p \in L^{\infty}(0,T;H^1(\Omega))\bigcap C^{0}(0,T;L^2_0(\Omega)) $,
 $c \in L^{\infty}(0,T;H^2(\Omega))\bigcap C^0(0,T;H^1_0(\Omega))$ \\
 and $c_{tt} \in C^{0}(0,T; H^1(\Omega))$\vspace{1mm}\\
\textbf{(v)} Additional assumptions on exact velocity solution are choosing $u_i$ for $i=1,2$ such that  all the combinations of $(\frac{\partial u_i}{\partial x_j} + \frac{\partial u_j}{\partial x_i})^2$  for $i,j=1,2$ are  bounded functions on $\Omega$ for each $t \in (0,T)$.

\subsection{Weak formulation}
Assuming the body forces and the source term $f_1^F,f_2^F,f^T \in L^2(0,T; L^2(\Omega))$ the appropriate spaces to derive the weak formulation are $V=H^1_0(\Omega) $ and $Q=L^2_0(\Omega)$. Now denoting the product space $V \times V \times Q \times V$ by $ \bar{\textbf{V}}$ the variational formulation of (6) is to find  \textbf{U}(t)= (\textbf{u}(t),p(t),c(t)) $ \in \bar{\textbf{V}}$ such that $\forall$ \textbf{V}=(\textbf{v},q,d) $\in  \bar{\textbf{V}}$ and for $a.e.$ $t \in J$
\begin{equation}
\begin{split}
(M\partial_t \textbf{U},\textbf{V}) + B(\textbf{u},\mu; \textbf{U}, \textbf{V}) & = L(\textbf{V})   \hspace{2 mm} \forall \textbf{V} \in \bar{\textbf{V}}
\end{split}
\end{equation} 
where $(M \partial_t \textbf{U}, \textbf{V})= \rho \int_{\Omega} \frac{\partial u_1}{\partial t} v_1 + \rho \int_{\Omega} \frac{\partial u_2}{\partial t} v_2 +\int_{\Omega} \frac{\partial c}{\partial t} d$ \vspace{1mm}\\
$B(\textbf{u},\mu; \textbf{U}, \textbf{V})$= $ a^F_{TL}(\textbf{u},\textbf{u},\textbf{v}) + a_{NL}^F(\mu; \textbf{u},\textbf{v})- b(\textbf{v},p)
+ b(\textbf{u},q)+ a_{L}^T(c,d)+ a_{TL}^T(\textbf{u},c,d) $ and  the linear functional $L(\textbf{V})= l^{F}(\textbf{v})+ l^{T}(d)$.\vspace{1mm}\\
where the notations are defined in the following:\\
$a^F_{TL}(\textbf{u},\textbf{v},\textbf{w})=\rho \int_{\Omega} ((\textbf{u} \cdot \bigtriangledown)\textbf{v})\cdot \textbf{w}+ \frac{\rho}{2} \int_{\Omega} (\bigtriangledown \cdot \textbf{u})\textbf{v} \cdot \textbf{w} $ and $b(\textbf{v},q)= \int_{\Omega} (\bigtriangledown \cdot \textbf{v}) q$\vspace{1 mm}\\
$a_{NL}^F(\mu; \textbf{u},\textbf{v})$= $\int_{\Omega} \mu(c,J_2) \{2 \frac{\partial u_1}{\partial x} \frac{\partial v_1}{\partial x} + \frac{\partial u_2}{\partial x} \frac{\partial v_1}{\partial y}+ \frac{\partial u_1}{\partial y} \frac{\partial v_2}{\partial x}+ \frac{\partial u_1}{\partial y} \frac{\partial v_1}{\partial y}+ \frac{\partial u_2}{\partial x} \frac{\partial v_2}{\partial x}+ 2 \frac{\partial u_2}{\partial y} \frac{\partial v_2}{\partial y}\}$;
 $a_{L}^T(c,d) = \int_{\Omega} \tilde{\bigtriangledown}c \cdot \bigtriangledown d  + \alpha\int_{\Omega}cd $ and
 $a_{TL}^T(\textbf{u},c,d)= \int_{\Omega} d \textbf{u} \cdot \bigtriangledown c$\vspace{1 mm} \\
 $l^{F} (\textbf{v})= \int_{\Omega} (f_1^F v_1+ f_2^F v_2)$ and  $l^{T}(d)= \int_{\Omega} f^Td$ \vspace{1 mm} \\
The addition of the incompressibility condition into the trilinear term $a^F_{TL}(\cdot,\cdot,\cdot)$ makes it equivalent to it's original form obtained from non-linear convective term in (1). This modification provides the following important properties of $a^F_{TL}(\cdot,\cdot,\cdot)$: \vspace{1mm}\\
(\textbf{a}) for each $\textbf{u} \in V \times V$, $a^F_{TL}(\textbf{u},\textbf{v},\textbf{v})=0$ \hspace{1mm} $\forall$ $\textbf{v} \in V \times V$ \vspace{1mm}\\
(\textbf{b}) for \textbf{u}, \textbf{v}, \textbf{w} $\in V \times V$
\begin{equation}
  a^F_{TL}(\textbf{u},\textbf{v},\textbf{w})\leq \begin{cases}
    C \|\textbf{u}\|_1 \|\textbf{v}\|_1 \|\textbf{w}\|_1 & \\
    C \|\textbf{u}\|_0 \|\textbf{v}\|_2 \|\textbf{w}\|_1 & \\
    C \|\textbf{u}\|_2 \|\textbf{v}\|_1 \|\textbf{w}\|_0 & \\
    C \|\textbf{u}\|_0 \|\textbf{v}\|_1 \|\textbf{w}\|_{L^{\infty}(\Omega)} 
  \end{cases}
\end{equation}
where $C$ is a constant and $\| \cdot \|_i$ for i=0,1,2 denote the standard $L^2, H^1, H^2$ full norms respectively.  From now onward for simplicity we use $\| \cdot \|$ instead of $\| \cdot \|_0$ to denote $L^2(\Omega)$ norm. The bilinear form $b(\textbf{v},q)$ also satisfies the continuous $inf$-$sup$ condition for this pair of velocity pressure spaces \cite{30}.

\section{Finite element formulations}
\subsection{Discretizations of space and time}
Let the domain $\Omega$ be discretized into $n_{el}$ numbers of subdomains $\Omega_k$ for $k$=1,2,..., $n_{el}$. Let $h_k$ be the diameter of each subdomain $\Omega_k$ where $h$ denotes $\underset{k=1,...n_{el}}{max} h_k$.
Let the  finite dimensional spaces  $V_h \subset V$ and $Q_h \subset Q$ be considered as: \vspace{1mm}\\
$V_h= \{ v \in V: v(\Omega_k)= \mathcal{P}^l(\Omega_k)\} $ and 
$Q_h= \{ q \in Q : q(\Omega_k)= \mathcal{P}^{l-1}(\Omega_k)\}$ \vspace{1 mm}\\
where  $\mathcal{P}^l(\Omega_k)$ denotes complete polynomial upto order $l$ over each $\Omega_k$ for $k$=1,2,..., $n_{el}$ along with this inclusion assumption $\bigtriangledown \cdot V_h \subset Q_h$. Considering a likewise notation $\bar{\textbf{V}}_h$ for denoting the product space $V_h \times V_h \times Q_h \times V_h$, the standard \textbf{Galerkin finite element formulation} for the variational form (8) is to find $\textbf{U}_h(t) $= $(\textbf{u}_h(t),p_h(t),c_h(t))$ $ \in \bar{\textbf{V}}_h$ such that $\forall$ $\textbf{V}_h=(\textbf{v}_h,q_h,d_h)$ $\in \bar{\textbf{V}}_h$ and for $a.e.$ $t \in J$
\begin{equation}
(M\partial_t \textbf{U}_h,\textbf{V}_h) + B(\textbf{u}_h, \mu_h; \textbf{U}_h, \textbf{V}_h) = L(\textbf{V}_h)   
\end{equation}
where $(M\partial_t \textbf{U}_h,\textbf{V}_h)$= $ \rho (\frac{\partial u_{1h}}{\partial t}, v_{1h})+ \rho (\frac{\partial u_{2h}}{\partial t}, v_{2h})+ (\frac{\partial c_h}{\partial t}, d_h)$ \vspace{1mm}\\
$B(\textbf{u}_h,\mu_h; \textbf{U}_h, \textbf{V}_h)$= $a^F_{TL}(\textbf{u}_h,\textbf{u}_h,\textbf{v}_h) + a_{NL}^F(\mu_h; \textbf{u}_h,\textbf{v}_h)- b(\textbf{v}_h,p_h)
+ b(\textbf{u}_h,q_h)+ a_{L}^T(c_h,d_h)+ a_{TL}^T(\textbf{u}_h,c_h,d_h)$\\
 and  the linear functional $L(\textbf{V}_h)= l^{F}(\textbf{v}_h)+ l^{T}(d_h)$.\vspace{1mm}\\
 Here $\mu_h$ denotes the discrete expression of $\mu$ in (3) after imposing it in finite dimensional setting.
In addition let us consider the initial conditions $(\textbf{u}_h, \textbf{v}_h)\mid_{t=0}=(\textbf{u}_0, \textbf{v}_h)$ $\forall \textbf{v}_h \in V_h \times V_h $ and
  $(c_h,d_h)\mid_{t=0}= (c_0,d_h)$ $\forall d_h \in V_h$. The finite element solutions also satisfy the following inverse inequalities for regular partition \cite{31} for all $v_h \in V_h$
  \begin{center}
  $\|\Delta v_h\| \leq C_I h^{-1} \|\nabla v_h\|$ \hspace{1mm}  and  \hspace{1mm}   $\|\nabla v_h\| \leq C_I h^{-1} \|v_h\|$
  \end{center}
The $backward$ $Euler$ fully implicit time discretization scheme here begins with dividing $T$ into $N$ number of small time steps and the $n^{th}$ time step is defined by $t_n$= $ndt$ where the time step $dt=\frac{T}{N}$. 
Let $\textbf{u}^{n+1},p^{n+1},c^{n+1}$ be approximations of $\textbf{u}(\textbf{x},t^{n+1}), p(\textbf{x},t^{n+1}),c(\textbf{x},t^{n+1})$ respectively. Now by Taylor series expansion \cite{32},we have 
\begin{equation}
\begin{split}
\frac{ \textbf{u}^{n+1}-\textbf{u}^n}{dt} & = \frac{\partial \textbf{u}}{\partial t}(\textbf{x},t^{n+1}) + TE_{\textbf{u}}\mid_{t=t^{n+1}} \\
\frac{c^{n+1}-c^n}{dt} & = \frac{\partial c}{\partial t}(\textbf{x},t^{n+1}) + TE_c \mid_{t=t^{n+1}} 
\end{split}
\end{equation}
where the truncation errors together denoted by  $\textbf{TE} \mid_{t=t^{n+1}}$ $\simeq$ $\textbf{TE}^{n+1}$,  depend upon the time-derivatives of the respective variables and the time step $dt$ in the following way \cite{34} and later applying assumption \textbf{(iv)} on the time derivative terms:
\begin{equation}
\begin{split}
\| TE_{ \textbf{u}}^{n+1}\| & \leq C'_1 dt \| \textbf{u}_{tt}^{n+1}\|_{L^{\infty}(t^n,t^{n+1},L^2)} \leq \bar{C}_1 dt \\
\| TE_{c}^{n+1}\| & \leq C'_2 dt \| c_{tt}^{n+1}\|_{L^{\infty}(t^n,t^{n+1},L^2)} \leq \bar{C}_2 dt \\
\end{split}
\end{equation}
After introducing all the required definitions the fully discrete Galerkin finite element formulation is to find $\textbf{U}_h^{n+1}= (\textbf{u}_h^{n+1},p_h^{n+1},c_h^{n+1}) \in \bar{\textbf{V}}_h$ for given $\textbf{U}_h^n = (\textbf{u}_h^n,p_h^n,c_h^n)\in \bar{\textbf{V}}_h$ such that $\forall \hspace{1mm} \textbf{V}_h=(\textbf{v}_h,q_h,d_h) \in \bar{\textbf{V}}_h $
\begin{equation}
(M\frac{(\textbf{U}_h^{n+1}-\textbf{U}_h^n)}{dt}, \textbf{V}_h)+ B(\textbf{u}_h^{n}, \mu^n_h ;\textbf{U}_h^{n+1}, \textbf{V}_h) = L(\textbf{V}_h) + (\textbf{TE}^{n+1},\textbf{V}_h)  
\end{equation}
\begin{remark}
Galerkin formulation admits numerical instabilities for convection dominated flows and for few pairs of velocity-pressure spaces failing to satisfy $inf$-$sup$ condition. Though the pair of spaces chosen here satisfies the discrete $inf$-$sup$ condition \cite{33}. The stabilized methods circumvent this compatibility condition and consequently any pair of finite dimensional spaces can be considered.
\end{remark}

\subsection{ Subgrid multiscale stabilized formulation}
In order to overcome the instabilities in the standard finite element formulation the derivation of this stabilized method begins with additive decomposition of the continuous solution $\textbf{U}$ into coarse scales and fine scales solutions. Here the computed solution $\textbf{U}_h \in \bar{\textbf{V}}_h$ is considered to be the coarse scales solution and the unknown fine scales solution $\tilde{\textbf{U}} \in \textbf{V}'$, also known as $subgrid$ scales be derived analytically, where $\textbf{V}'$ a subspace of $\bar{\textbf{V}}$ completes $\bar{\textbf{V}}_h$ in $\bar{\textbf{V}}$.  Applying similar decomposition on the test function $\textbf{V}=\textbf{V}_h+ \tilde{\textbf{V}}$, the weak formulation (8) can be equivalently expressed as
\begin{equation}
\begin{split}
(M \partial_t \textbf{U}_h+M \partial_t \tilde{\textbf{U}},\textbf{V}_h)+B(\textbf{u}_h, \mu_h; \textbf{U}_h+ \tilde{\textbf{U}}, \textbf{V}_h) & = L(\textbf{V}_h) \\
(M \partial_t \textbf{U}_h+M \partial_t \tilde{\textbf{U}},\tilde{\textbf{V}})+B(\textbf{u}_h, \mu_h; \textbf{U}_h +  \tilde{\textbf{U}}, \tilde{\textbf{V}}) & = L(\tilde{\textbf{U}}) \\
\end{split}
\end{equation}
Now our aim is to find an analytical expression for the unknown subgrid scale $\tilde{\textbf{U}}$ in terms of $\textbf{U}_h$. In this regard we perform element wise integration on the second sub-equation of (14) and approximate the differential operator $\mathcal{L}$ by an algebraic operator $\tau_k$ over each sub domain $\Omega_k$ we have
\begin{equation}
M \partial_t \tilde{\textbf{U}} + \tau_k^{-1} \tilde{\textbf{U}} =\textbf{R}_h:= \textbf{F}-M \partial_t \textbf{U}_h- \mathcal{L}(\textbf{u}_h, \mu_h; \textbf{U}_h) \hspace{1mm} on \hspace{1mm} \Omega_k
\end{equation}
where $\textbf{R}_h$ is the residual vector. Each of the stabilization parameters \cite{3} is of the following form:
\begin{equation}
\begin{split}
\tau_{1k} &= \tau_{1}= (c_1 \frac{\eta_0}{h^2}+  c_2 \frac{\rho \mid \textbf{u}_h \mid}{h})^{-1} \\
\tau_{2k} &=\tau_{2}=\frac{h^2}{c_1 \tau_{1}} \\
\tau_{3k} & = \tau_{3}= c_3(\frac{9D_m}{4h^2} + \frac{3\mid \textbf{u}_h \mid}{2h} + \alpha )^{-1}
\end{split}
\end{equation}
where $c_1,c_2,c_3$ are suitably chosen positive parameters independent of viscosity, $\eta_0$ is total viscosity and $\mid \textbf{u}_h \mid $ of the computed velocity and $D_m$ is deduced in \cite{10}. 
Now applying the $backward$ $Euler$ time discretization scheme on the above equation (15) we have
\begin{equation}
\tilde{\textbf{U}}= \bar{\tau}_k(\textbf{R}^h+ \bar{\textbf{d}})
\end{equation}
where the stabilization parameter $\tau_k$ = $diag(\tau_{1k},\tau_{1k},\tau_{2k}, \tau_{3k})$ and $\bar{\tau}_k$= $(\frac{1}{dt}M+ \tau_k^{-1})^{-1}$= $diag(\frac{\tau_{1k} dt}{dt+ \rho \tau_{1k}}, \frac{\tau_{1k} dt}{dt+ \rho \tau_{1k}}, \tau_{2k}, \frac{\tau_{3k} dt}{dt+  \tau_{3k}})= diag(\bar{\tau}_{1k}, \bar{\tau}_{1k}, \bar{\tau}_{2k}, \bar{\tau}_{3k})$ (say) and $\bar{\textbf{d}}$= $\sum_{i=0}^{N}(\frac{1}{dt}M \bar{\tau}_k)^i(\textbf{F} -M\partial_t \textbf{U}_h - \mathcal{L}(\textbf{u}_h, \mu_h ;\textbf{U}_h))$.\vspace{1mm}\\
Now we perform element wise integration on that part of $B(\cdot,\cdot; \cdot, \cdot)$ in the first sub-equation of (14) which contains $\tilde{\textbf{U}}$ to separate it out in the following way
\begin{equation}
(M \partial_t \textbf{U}_h + M \partial_t \tilde{\textbf{U}},\textbf{V}_h)+B(\textbf{u}_h, \mu_h; \textbf{U}_h, \textbf{V}_h) + \sum_{k=1}^{n_{el}} (\tilde{\textbf{U}}, \mathcal{L}^*(\textbf{u}_h, \mu_h; \textbf{V}_h) )_k   = L(\textbf{V}_h)
\end{equation}
where $(\cdot,\cdot)_k$ denotes inner product on an element $\Omega_k$. Now consecutive substitution of the results (15)-(16) into the above equation (17) results in the following \textbf{ algebraic subgrid multiscale} ($ASGS$) stabilized finite element formulation: to 
find $\textbf{U}_h(t) $= $(\textbf{u}_h(t),p_h(t),c_h(t))$ $ \in \bar{\textbf{V}}_h$ such that $\forall$ $\textbf{V}_h=(\textbf{v}_h,q_h,d_h)$ $\in \bar{\textbf{V}}_h$ and for $a.e.$ $t \in J$
\begin{equation}
(M\partial_t \textbf{U}_h,\textbf{V}_h) + B_{S}(\textbf{u}_h, \mu_h ; \textbf{U}_h, \textbf{V}_h)  = L_{S}(\textbf{V}_h)  
\end{equation}
where $B_{S}(\textbf{u}_h,  \mu_h ;\textbf{U}_h, \textbf{V}_h)= B(\textbf{u}_h,  \mu_h; \textbf{U}_h, \textbf{V}_h)+ \sum_{k=1}^{n_{el}} (\tau_k' (M\partial_t \textbf{U}_h+ \mathcal{L}(\textbf{u}_h, \mu_h ;\textbf{U}_h)-\textbf{d}), -\mathcal{L}^*(\textbf{u}_h, \mu_h;\textbf{V}_h))_{\Omega_k}- \sum_{k=1}^{n_{el}}((I-\tau_k^{-1}\tau_k') (M\partial_t \textbf{U}_h + \mathcal{L}(\textbf{u}_h, \mu_h;\textbf{U}_h)), \textbf{V}_h)_{\Omega_k} 
-\sum_{k=1}^{n_{el}} (\tau_k^{-1}\tau_k' \textbf{d}, \textbf{V}_h)_{\Omega_k}$ \vspace{1 mm}\\
$L_{S}(\textbf{V}_h)= L(\textbf{V}_h)+ \sum_{k=1}^{n_{el}}(\tau_k' \textbf{F}, -\mathcal{L}^*(\textbf{u}_h, \mu_h;\textbf{V}_h))_{\Omega_k}- \sum_{k=1}^{n_{el}}((I-\tau_k^{-1}\tau_k')\textbf{F}, \textbf{V}_h)_{\Omega_k}$
\begin{remark}
One important aspect of the method employed here is both the subscales are time dependent and hence the method is known as subgrid multiscale method with dynamic subscales. This feature makes this method more accurate in comparison  with subgrid method with quasi static subscales, where the unknown subgrid scales is considered time independent. Here we have not studied convection tracking as the effect of subgrid scale on convective term is insignificant in the study of convergence analysis.
\end{remark}
\subsubsection{Stability analysis}
Stability of the pressure term is ensured by holding of $inf$-$sup$ condition for these choices of spaces. In this section we analyze stability of the solutions with respect to the given data. \\
On applying $backward$ $Euler$ time discretization scheme the stabilized formulation (19) can be equivalently expressed as the following system of equations: for given $\textbf{U}_h^{n}= (\textbf{u}_h^{n},p_h^{n},c_h^{n}) \in \bar{\textbf{V}}_h$ and $\tilde{\textbf{U}}^{n}=(\tilde{\textbf{u}}^n, \tilde{p}^n,\tilde{c}^n) \in \textbf{V}'$ find $\textbf{U}_h^{n+1}= (\textbf{u}_h^{n+1},p_h^{n+1},c_h^{n+1}) \in \bar{\textbf{V}}_h$ and $\tilde{\textbf{U}}^{n+1}=(\tilde{\textbf{u}}^{n+1}, \tilde{p}^{n+1},\tilde{c}^{n+1}) \in \textbf{V}'$ such that $\forall \hspace{1mm} \textbf{V}_h \in \bar{\textbf{V}}_h $
\begin{multline}
(M \frac{\textbf{U}_h^{n+1}-\textbf{U}_h^n}{dt}, \textbf{V}_h)+B(\textbf{u}_h^n,\mu_h^n; \textbf{U}_h^{n+1}, \textbf{V}_h)+ (\tilde{\textbf{U}}^{n+1},\mathcal{L}(\textbf{u}_h,\mu_h;\textbf{V}_h)) \\
- (\tilde{\textbf{U}}^{n+1}, M \partial_t \textbf{V}_h) = L(\textbf{V}_h)\\
 M \frac{\tilde{\textbf{U}}^{n+1}-\tilde{\textbf{U}}^n}{dt} + \tau_k^{-1}\tilde{\textbf{U}}^{n+1}  = \textbf{F}^{n+1}- M \partial_t \textbf{U}_h^{n+1}- \mathcal{L}(\textbf{u}_h^n,\mu_h^n;\textbf{U}_h^{n+1})
\end{multline}
The assumptions $\textbf{u}_0,c_0 \in L^2(\Omega)$ imply uniform boundedness of $\|\textbf{u}_h^0\|$ and $\|c_h^0\|$ and we assume the initial condition for subgrid scales $\tilde{\textbf{U}}|_{t=0}=0$. Again the assumptions made in section 2.2 on the admissible spaces of the body forces and source term imply the corresponding fully-discrete terms $\{f_1^{F^n}\}, \{f_2^{F^n}\}, \{f^{T^n}\} \in l^2(0,T;L^2(\Omega))$.
\begin{theorem}
For $(\textbf{u}_h^{n+1},p_h^{n+1},c_h^{n+1})$ and $(\tilde{\textbf{u}}^{n+1}, \tilde{p}^{n+1},\tilde{c}^{n+1})$ being solutions of (20) and for the finite element solutions satisfying the inverse inequalities for regular partitions, the following stability bounds hold for all $dt>0$
\begin{multline}
\underset{n=0,...,N-1}{max} \{ \|\textbf{u}^{n+1}_h\|^2 +\|c^{n+1}_h\|^2+ \| \tilde{\textbf{u}}^{n+1}\|^2+\|\tilde{c}^{n+1}\|^2 \} +\\
 \sum_{n=0}^{N-1}dt (C_1^s \|\nabla \textbf{u}_h^{n+1}\|^2 + C_2^s  \|\textbf{u}_h^{n+1}\|^2 + C_3^s \| \tau_{1k}^{-\frac{1}{2}} \tilde{\textbf{u}}^{n+1}\|^2)+ \\
 \sum_{n=0}^{N-1}dt (C_4^s \|\nabla c_h^{n+1}\|^2 + C_5^s  \|c_h^{n+1}\|^2 + C_6^s \| \tau_{3k}^{-\frac{1}{2}} \tilde{c}^{n+1}\|^2)\\
  \leq C_7^s \{ \sum_{n=0}^{N-1} dt( \|f_1^{F^{n+1}}\|^2 +\|f_2^{F^{n+1}}\|^2+\|f^{T^{n+1}}\|^2)+  \|\textbf{u}_h^{0}\|^2+\|c_h^0\|^2\}
\end{multline}
where $C_i^s$ are positive constants for i=1,...,7. Furthermore $\{f_1^{F^n}\}, \{f_2^{F^n}\}, \{f^{T^n}\} \in l^2(0,T;L^2(\Omega))$ and  uniformly bounded $\|\textbf{u}_h^0\|$ and $\|c_h^0\|$ implies that
\begin{center}
$ \{\tilde{\textbf{u} }^n\}, \{\tilde{c}^n\} \in l^{\infty}(0,T; L^2(\Omega)), \{ \tau_{1k}^{-\frac{1}{2}} \tilde{\textbf{u}}^{n+1} \}, \{ \tau_{3k}^{-\frac{1}{2}} \tilde{c}^{n+1} \} \in  l^2(0,T: L^2(\Omega))$ \vspace{1mm}\\
$\{\textbf{u}^n_h\},  \{c^n_h\} \in l^{\infty}(0,T; L^2(\Omega)) \bigcap l^2(0,T; H^1(\Omega)) $
\end{center}
\end{theorem}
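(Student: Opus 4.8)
The plan is to establish (21) by a discrete energy argument: test the coarse-scale equation in (20) with $\textbf{V}_h=\textbf{U}_h^{n+1}=(\textbf{u}_h^{n+1},p_h^{n+1},c_h^{n+1})$ and simultaneously test the subgrid equation in (20) with $\tilde{\textbf{U}}^{n+1}$ in the elementwise $L^2$ inner product, then add the two resulting identities. For both temporal-difference terms I would use the algebraic identity $2(a-b,a)=\|a\|^2-\|b\|^2+\|a-b\|^2$, which converts the backward-Euler terms $(M(\textbf{U}_h^{n+1}-\textbf{U}_h^n),\textbf{U}_h^{n+1})$ and $(M(\tilde{\textbf{U}}^{n+1}-\tilde{\textbf{U}}^n),\tilde{\textbf{U}}^{n+1})$ into telescoping $M$-weighted norm differences plus nonnegative jump terms. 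Note that the weighting $M=\mathrm{diag}(\rho,\rho,0,1)$ annihilates the pressure contribution, consistent with the pressure being controlled separately through the inf-sup condition rather than appearing in (21).

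Next I would extract the coercive structure from $B(\textbf{u}_h^n,\mu_h^n;\textbf{U}_h^{n+1},\textbf{U}_h^{n+1})$. Property (a) kills the fluid convective term $a^F_{TL}(\textbf{u}_h^n,\textbf{u}_h^{n+1},\textbf{u}_h^{n+1})=0$, and the pressure pair $-b(\textbf{u}_h^{n+1},p_h^{n+1})+b(\textbf{u}_h^{n+1},p_h^{n+1})$ cancels. From (3) together with assumption (i) one has the uniform lower bound $\mu_h^n\ge\eta_l>0$, so $a^F_{NL}(\mu_h^n;\textbf{u}_h^{n+1},\textbf{u}_h^{n+1})$ controls $\|\nabla\textbf{u}_h^{n+1}\|^2$ (via the deformation-tensor structure and Korn/Poincar\'e), while $a^T_L(c_h^{n+1},c_h^{n+1})$ supplies $D_{\min}\|\nabla c_h^{n+1}\|^2+\alpha\|c_h^{n+1}\|^2$; Poincar\'e then yields the $\|\textbf{u}_h^{n+1}\|^2$ term. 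The transport term $a^T_{TL}(\textbf{u}_h^n,c_h^{n+1},c_h^{n+1})$ I would rewrite by integration by parts as $-\tfrac12\int(\nabla\cdot\textbf{u}_h^n)(c_h^{n+1})^2$ and dispose of it using the (near) divergence-free property coming from $\nabla\cdot V_h\subset Q_h$, or else bound it by H\"older/Young and absorb its $\|\nabla c_h^{n+1}\|$ factor into the diffusion term. The subgrid equation contributes the essential dissipation $(\tau_k^{-1}\tilde{\textbf{U}}^{n+1},\tilde{\textbf{U}}^{n+1})=\|\tau_{1k}^{-1/2}\tilde{\textbf{u}}^{n+1}\|^2+\|\tau_{3k}^{-1/2}\tilde{c}^{n+1}\|^2$ (plus the $\tau_{2k}$-weighted subgrid-pressure term), which produces the weighted subgrid norms on the left of (21).

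The crux, and the \textbf{main obstacle}, is controlling the coarse--fine coupling after the two identities are summed. The mixed term $-(\tilde{\textbf{U}}^{n+1},M\partial_t\textbf{U}_h^{n+1})$ from the coarse equation cancels exactly against $+(M\partial_t\textbf{U}_h^{n+1},\tilde{\textbf{U}}^{n+1})$ from the subgrid equation, but the interaction $(\tilde{\textbf{U}}^{n+1},\mathcal{L}(\textbf{u}_h,\mu_h;\textbf{U}_h^{n+1}))$ together with the residual pairing $(\mathcal{L}(\textbf{u}_h^n,\mu_h^n;\textbf{U}_h^{n+1}),\tilde{\textbf{U}}^{n+1})$ remains. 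These are delicate because $\mathcal{L}$ acting on the discrete coarse field carries second-order derivatives (the viscous and diffusive pieces). I would either re-express $\mathcal{L}(\textbf{u}_h^n,\mu_h^n;\textbf{U}_h^{n+1})$ through the subgrid equation itself as $\textbf{F}^{n+1}-M\partial_t\textbf{U}_h^{n+1}-\tau_k^{-1}\tilde{\textbf{U}}^{n+1}-M\,dt^{-1}(\tilde{\textbf{U}}^{n+1}-\tilde{\textbf{U}}^n)$, or estimate it directly using the inverse inequalities $\|\Delta v_h\|\le C_I h^{-1}\|\nabla v_h\|$, pairing each $h^{-1}$ against the $h^2$-scaling built into $\tau_{1k}\sim h^2/\eta_0$ and $\tau_{3k}\sim h^2/D_m$ so that, after Cauchy--Schwarz and Young, every coupling contribution is absorbed either into the coercive coarse-scale norms or into the weighted subgrid dissipation $\|\tau_k^{-1/2}\tilde{\textbf{U}}^{n+1}\|^2$. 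The first-order (convective) parts are handled analogously using the $\rho|\textbf{u}_h|/h$ and $|\textbf{u}_h|/h$ contributions to $\tau_{1k},\tau_{3k}$, with the nonlinear coefficient $\mu_h^n$ kept bounded through assumptions (i) and (v).

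Finally I would bound the data functionals $L(\textbf{U}_h^{n+1})$ and $(\textbf{F}^{n+1},\tilde{\textbf{U}}^{n+1})$ by Cauchy--Schwarz and Young with small parameters, absorbing the solution-norm factors into the coercive left-hand side and leaving $\tfrac{C}{2}(\|f_1^{F^{n+1}}\|^2+\|f_2^{F^{n+1}}\|^2+\|f^{T^{n+1}}\|^2)$ on the right. Multiplying by $dt$, summing over $n=0,\dots,N-1$, telescoping the norm differences, and invoking $\tilde{\textbf{U}}^0=0$ then gives (21); any residual terms carrying $\|c_h^{n+1}\|^2$ or $\|\textbf{u}_h^n\|^2$ (from the transport term or the $\mu_h^n$-dependence) are closed by a discrete Gr\"onwall inequality, which is where the constants $C_i^s$ acquire their $T$-dependence. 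The asserted $l^\infty(L^2)$, $l^2(H^1)$ and weighted $l^2(L^2)$ memberships then follow immediately from (21) given $\{f_i^{F^n}\},\{f^{T^n}\}\in l^2(0,T;L^2(\Omega))$ and the uniform bounds on $\|\textbf{u}_h^0\|,\|c_h^0\|$.
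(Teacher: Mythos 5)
Your proposal is correct in outline and shares the paper's overall skeleton---test the coarse-scale equation with $\textbf{U}_h^{n+1}$, the subgrid equation with $\tilde{\textbf{U}}^{n+1}$, add the two identities, extract coercivity of $B$ (property (a) for $a^F_{TL}$, cancellation of the pressure pair, $\mu_h^n\geq \eta_l$ for the viscous form, the diffusion--reaction structure for $a^T_L$), bound the data terms by Cauchy--Schwarz and Young, multiply by $dt$ and sum---but it diverges from the paper at both technical cruxes, and in each case your route is the sharper one. For the backward-Euler terms the paper does not use the telescoping identity $2(a-b,a)=\|a\|^2-\|b\|^2+\|a-b\|^2$; it bounds the cross terms $(\frac{M}{dt}(\textbf{U}_h^n+\tilde{\textbf{U}}^n),\textbf{U}_h^{n+1})$ directly by Cauchy--Schwarz and Young (its estimate (23)), which is cruder than your polarization argument. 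More significantly, for the coarse--fine coupling terms carrying second derivatives of the discrete solution, the paper never invokes the inverse inequalities at all: it appeals to an ``observation'' that every finite element function together with its first and second derivatives is bounded on each element $\Omega_k$, and on that basis bounds each coupling term by a constant times $\|\tilde{u}_1^{n+1}\|$, $\|\tilde{u}_2^{n+1}\|$ or $\|\tilde{c}^{n+1}\|$ (its estimates (25)), with all dependence on the discrete solution swallowed into unexamined constants $\bar{C}_i$. Your treatment---Cauchy--Schwarz against $\tau_k^{1/2}$, the inverse estimate $\|\Delta v_h\|\leq C_I h^{-1}\|\nabla v_h\|$, and the $h^2$ scaling of $\tau_{1k},\tau_{3k}$, so that every coupling contribution is absorbed into the viscous/diffusive coercivity or the subgrid dissipation $\|\tau_k^{-1/2}\tilde{\textbf{U}}^{n+1}\|^2$---is the standard ASGS stability argument; it keeps the constants traceable, it requires only that $c_1,c_2,c_3$ be chosen compatibly with $C_I$ (consistent with the paper calling them ``suitably chosen''), and it is the only one of the two arguments that actually uses the hypothesis, stated in the theorem itself, that the inverse inequalities hold. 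Likewise, your explicit handling of the transport trilinear term $a^T_{TL}(\textbf{u}_h^n,c_h^{n+1},c_h^{n+1})$ (integration by parts plus absorption into the diffusion term) and your closing Gr\"onwall step address points the paper passes over silently: its coercivity claim (24) is asserted without noting that $a^T_{TL}$, unlike $a^F_{TL}$, is not skew-symmetrized and so does not vanish for equal arguments. In short, your proof is a valid and more rigorous alternative to the paper's, at the cost of a compatibility condition between the stabilization constants and the inverse-inequality constant.
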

\begin{proof}
Substituting $\textbf{V}_h$ by $(\textbf{u}_h^{n+1},p_h^{n+1},c_h^{n+1})$ in first sub equation of (20) and integrating the second one after multiplying it by $\tilde{\textbf{U}}^{n+1}$ on both sides we have
\begin{multline}
\frac{M}{dt}( \|\textbf{U}_h^{n+1}\|^2+ \|\tilde{\textbf{U}}^{n+1}\|^2)-(\frac{M}{dt} (\textbf{U}^n_h +  \tilde{\textbf{U}}^n), \textbf{U}_h^{n+1})+ \tau_k^{-1} \|\tilde{\textbf{U}}^{n+1}\|^2+2 \alpha \|c^{n+1}\|^2 +\\
B(\textbf{u}_h^n,\mu_h^n; \textbf{U}_h^{n+1}, \textbf{U}_h^{n+1})-2 \sum_{k=1}^{n_{el}}(\tilde{u}_1^{n+1}, \frac{\partial}{\partial x}(2 \mu_h \frac{\partial u_{1h}^{n+1}}{\partial x})+\frac{\partial}{\partial y}(\mu_h( \frac{\partial u_{2h}^{n+1}}{\partial x}+ \frac{\partial u_{1h}^{n+1}}{\partial y})))_k-\\
2 \sum_{k=1}^{n_{el}}(\tilde{u}_2^{n+1},\frac{\partial}{\partial x}(\mu_h( \frac{\partial u_{2h}^{n+1}}{\partial x}+ \frac{\partial u_{1h}^{n+1}}{\partial y}))+\frac{\partial}{\partial y}(2 \mu_h \frac{\partial u_{2h}^{n+1}}{\partial y}))_k-2 \sum_{k=1}^{n_{el}}(\tilde{c}^{n+1}, \nabla \cdot \tilde{\nabla} c_h^{n+1})_k\\
=(\textbf{F}^{n+1}, \textbf{U}_h^{n+1})+ (\textbf{F}^{n+1}, \tilde{\textbf{U}}^{n+1})
\end{multline}
Now we separately find bounds for each of the above terms. Applying the Cauchy-Schwarz and Youngs inequalities subsequently we have obtained the following results
\begin{equation}
\begin{split}
(\frac{M}{dt} (\textbf{U}^n_h +  \tilde{\textbf{U}}^n), \textbf{U}_h^{n+1}) & \leq \frac{\rho}{ dt}(\|\textbf{u}^n_h\|^2+ \|\textbf{u}_h^{n+1}\|^2+ \| \tilde{\textbf{u}}^n\|^2)\\
& \quad + \frac{1}{dt} (\|c^n_h\|^2+\|c^{n+1}_h\|^2+ \| \tilde{c}^n\|^2)\\
(\textbf{F}^{n+1}, \textbf{U}_h^{n+1}) + (\textbf{F}^{n+1}, \tilde{\textbf{U}}^{n+1}) & \leq \bar{\epsilon}_1 (\|f_1^{n+1}\|^2 +\|f_2^{n+1}\|^2+\|c^{n+1}\|^2)+\\
& \quad \frac{1}{2 \bar{\epsilon}_1}( \|\textbf{u}_h^{n+1}\|^2 + \| \tilde{\textbf{u}}^{n+1}\|^2+ \|g^{n+1}_h\|^2+\| \tilde{c}^n\|^2) \\
\end{split}
\end{equation}
To find an appropriate bound on $B(\cdot,\cdot; \cdot,\cdot)$ we have applied property $\textbf{(a)}$ on the trilinear term $c(\cdot,\cdot,\cdot)$ and assumptions $\textbf{(i)}-\textbf{(iv)}$ on the coefficients and finally arrived at the following
\begin{equation}
\begin{split}
B(\textbf{u}_h^n,\eta_h^n; \textbf{U}_h^{n+1}, \textbf{U}_h^{n+1}) 
& \geq 2 \eta_l  \| \nabla \textbf{u}_h^{n+1} \|^2 + D_l \| \nabla c^{n+1}\|^2 + \alpha \|c^{n+1}\|^2 \\
\end{split}
\end{equation}
Formation of $\eta_l$ is discussed in details in the proof of theorem 2. Now to estimate the remaining terms which are defined over each sub domain $\Omega_k$ we make use of an important observation: by the virtue of the choices of the finite element spaces $V_h$ and $Q_h$, we can clearly say that over each element sub domain every function belonging to that spaces and their first and second order derivatives all are bounded functions.
\begin{equation}
\begin{split}
 \sum_{k=1}^{n_{el}}(\tilde{u}_1^{n+1}, \frac{\partial}{\partial x}(2 \eta_h \frac{\partial u_{1h}^{n+1}}{\partial x})+\frac{\partial}{\partial y}(\eta_h( \frac{\partial u_{2h}^{n+1}}{\partial x}+ \frac{\partial u_{1h}^{n+1}}{\partial y})))_k & \leq \bar{C}_1 \|\tilde{u}_1^{n+1}\| \\
  \sum_{k=1}^{n_{el}}(\tilde{u}_2^{n+1}, \frac{\partial}{\partial x}(\eta_h( \frac{\partial u_{2h}^{n+1}}{\partial x}+ \frac{\partial u_{1h}^{n+1}}{\partial y}))+\frac{\partial}{\partial y}(2 \eta_h \frac{\partial u_{2h}^{n+1}}{\partial y}))_k & \leq \bar{C}_2 \|\tilde{u}_2^{n+1}\| \\
\sum_{k=1}^{n_{el}}(\tilde{c}^{n+1}, \nabla \cdot \tilde{\nabla} c_h^{n+1})_k & \leq \bar{C}_3 \|\tilde{c}^{n+1}\|
\end{split}
\end{equation} 
where $\bar{C}_i$ for i=1,2,3 are positive constants obtained after applying the above observation. Now combining all these estimated results in (20), multiplying the resulting equation by $dt$ and adding for $n=0$ to $N-1$ we prove the stability result (19).
\end{proof}

\section{Error estimates}
This section comprises detailed derivations of $ apriori$ and $aposteriori$ error estimates with respect to that full norm, based on which we have analyzed stability of the numerical method. Let us here introduce the norm definitions explicitly. For simplifying the notational expressions we specifically use the alphabets $\bar{\textbf{P}}$ and $\bar{\textbf{Q}}$ to denote the spaces $L^{\infty}(0,T; L^2(\Omega)) \bigcap L^{2}(0,T; H^1(\Omega)) $ and $L^{2}(0,T; L^2(\Omega))$ respectively, where for any $f \in \bar{\textbf{P}}$ and $g \in \bar{\textbf{Q}}$
\begin{center}
$\|f\|_{\bar{\textbf{P}}}^2  = \underset{0\leq n \leq N-1}{max} \|f^{n+1}\|^2 +dt \sum_{n=0}^{N-1} ( \| f^{n+1} \|^2  + \| \frac{\partial f}{\partial x}^{n+1} \|^2 + \| \frac{\partial f}{\partial y}^{n+1}\|^2)$\\
$\|g\|_{\bar{\textbf{Q}}}^2 = dt \sum_{n=0}^{N-1} \|g^{n+1}\|^2 dt $
\end{center}
Let us introduce the projection operator for each of these error components.\vspace{1 mm}\\
(P1) For any $\textbf{u} \in H^2(\Omega) \times H^2(\Omega) $ we assume that there exists an interpolation $I^h_{\textbf{u}}:  H^2(\Omega) \times H^2(\Omega) \longrightarrow  V_h \times V_h $ satisfying $b(\textbf{u}-I^h_{\textbf{u}}\textbf{u}, q_h)=0$ \hspace{2mm} $\forall q_h \in Q_h$. \vspace{2mm}\\
(P2) Let $I^h_p: H^1(\Omega) \longrightarrow Q_h$ be the $L^2$ orthogonal projection given by \\ $\int_{\Omega}(p-I^h_pp)q_h=0$  \hspace{1mm} $\forall q_h \in Q_h$ and for any $p \in H^1(\Omega)$ and\vspace{2mm}\\
(P3) Let $I^h_{c}: H^2(\Omega) \longrightarrow V_h$ be the $L^2$ orthogonal projection given by \\ $\int_{\Omega}(c-I^h_c c)d_h=0$ \hspace{1mm} $ \forall d_h \in V_h$ and for any $c \in H^2(\Omega)$. \vspace{2mm}\\
Let us denote the error by $\textbf{e}_{\textbf{U}}=(e_{\textbf{u}},e_p,e_c)$, where the components can be separately split into interpolation part, $E^I$ and auxiliary part, $E^A$ as follows:  \vspace{1mm}\\
\begin{equation}
\begin{split}
e_{\textbf{u}} & =(\textbf{u}-\textbf{u}_h)=(\textbf{u}-I^h_{\textbf{u}} \textbf{u})+(I^h_{\textbf{u}} \textbf{u}-\textbf{u}_h)= E^{I}_{\textbf{u}}+ E^{A}_{\textbf{u}}\\
e_{p} & =E^{I}_{p}+ E^{A}_{p} \hspace{2mm} and  \hspace{2mm} e_{c}=E^{I}_{c}+ E^{A}_{c}
\end{split}
\end{equation}
The standard \textbf{interpolation estimation} result \cite{4} is given as: for any exact solution with regularity upto (r+1)
\begin{equation}
\|v-I^h_v v\|_l = \|E^I_v\|_l \leq C(p,\Omega) h^{r+1-l} \|v\|_{r+1} 
\end{equation}
where l ($\leq r+1$) is a positive integer and C is a constant depending on m and the domain. 
The following results \cite{34} associated with projection operators play important role in error estimations.
\begin{result}
 For any interpolation error $E^I$
\begin{equation}
(\frac{\partial}{\partial t} E^{I}, v_h)=0 \hspace{2mm} \forall v_h \in V_h
\end{equation}
\end{result}
\begin{result}
 For any given auxiliary error $E^{A,n}$ and unknown $E^{A,n+1}$
\begin{equation}
(\frac{\partial}{\partial t} E^{A,n}, E^{A,n,\theta}) \geq  \frac{1}{2 dt} (\|E^{A,n+1}\|^2- \|E^{A,n}\|^2)
\end{equation}
\end{result}

\subsection{Apriori error estimation}
The whole derivation  involves a separate estimation of $auxiliary$ error bound in first part and later using that result and standard interpolation estimate we ultimately derive $apriori$ error bound.
\begin{theorem} (Auxiliary error estimate) \hspace{1mm} Assuming the viscosity,diffusion, density and reaction coefficients satisfying the assumptions \textbf{(i)}-\textbf{(iii)} and considering adequately small time step $dt(>0)$, then for sufficiently regular continuous solutions $(\textbf{u},p,c)$ satisfying the assumptions \textbf{(iv)}-\textbf{(v)} and the computed solutions $(\textbf{u}_h,p_h,c_h) \in$ $V_h \times V_h \times Q_h \times V_h$ satisfying (19),  there exists a constant $\tilde{C}(\textbf{U})$, depending upon the continuous solution such that
\begin{center}
$\|E^A_{\textbf{u}}\|^2_{\bar{\textbf{P}}}+ \|E^A_p\|_{\bar{\textbf{Q}}}^2  + \|E^A_{c}\|^2_{\bar{\textbf{P}}} \leq \tilde{C}(\textbf{U}) (h^2+ dt^{2})$
\end{center}
\end{theorem}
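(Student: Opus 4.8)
The plan is to obtain an energy estimate for the auxiliary error $\textbf{E}^A=(E^A_{\textbf{u}},E^A_p,E^A_c)$ by exploiting the strong consistency of the residual-based formulation (19). First I would write the error equation: since the $ASGS$ scheme is built from the residual of (6), the exact solution $\textbf{U}$ satisfies the fully discrete stabilized system up to the backward-Euler truncation error $\textbf{TE}^{n+1}$ estimated in (12). Subtracting the discrete system (20) from this consistent equation and inserting the splitting $\textbf{e}_{\textbf{U}}=\textbf{E}^I+\textbf{E}^A$ of (26), I would test with $\textbf{V}_h=\textbf{E}^A$, which is admissible because, by construction of the projections (P1)--(P3), each auxiliary component lies in the corresponding discrete space. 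The interpolation parts $\textbf{E}^I$ together with $\textbf{TE}^{n+1}$ then act as forcing terms on the right-hand side.

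Next I would handle the temporal contributions. By Result 1 the time derivative of the interpolation error is $L^2$-orthogonal to $V_h$, so it drops out when tested against $\textbf{E}^A$; by Result 2 the auxiliary time-derivative terms for velocity and concentration are bounded below by the telescoping quantity $\frac{1}{2\,dt}(\|E^{A,n+1}\|^2-\|E^{A,n}\|^2)$, which after multiplication by $dt$ and summation over $n$ yields the $L^{\infty}(L^2)$ components of the $\bar{\textbf{P}}$ norms. For the form $B$, property (\textbf{a}) annihilates the convective trilinear term $a^F_{TL}$ on the diagonal, and the coercivity bound derived in the proof of Theorem 1 supplies the dissipative quantities $2\eta_l\|\nabla E^A_{\textbf{u}}\|^2+D_l\|\nabla E^A_c\|^2+\alpha\|E^A_c\|^2$, giving the $L^2(H^1)$ components. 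The pressure auxiliary error is recovered separately: the discrete inf-sup condition applied to $b(E^A_{\textbf{u}},E^A_p)$ controls $\|E^A_p\|_{\bar{\textbf{Q}}}$ once the velocity error is bounded, while the compatibility property (P1) removes the pressure interpolation error from the divergence coupling.

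The main obstacle is the nonlinear Casson viscosity, through which the momentum and transport errors are tightly coupled in the term $a^F_{NL}(\mu;\textbf{u},\cdot)-a^F_{NL}(\mu_h;\textbf{u}_h,\cdot)$. I would add and subtract $a^F_{NL}(\mu;\textbf{u}_h,\cdot)$ to split this into an argument-perturbation part, which combines with the coercivity above, and a viscosity-perturbation part $\int(\mu-\mu_h)\,(\cdot)$. Here the formation of $\eta_l$ is made explicit: expanding (3) gives $\mu(c,J_2)=\eta(c)+\sqrt{2\tau_y\,\eta(c)}\,J_2^{-1/4}+\tfrac{\tau_y}{2}J_2^{-1/2}\ge\eta(c)\ge\eta_l$ by assumption (i), which is precisely the lower bound invoked in Theorem 1. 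The viscosity-perturbation part requires a Lipschitz-type control of $\mu(c,J_2)-\mu(c_h,J_{2h})$ by a multiple of $|e_c|$ plus a multiple of $|\nabla e_{\textbf{u}}|$; this is the delicate step, since it rests on assumption (i), on the boundedness of all strain combinations from assumption (v), and crucially on the elementwise boundedness of the discrete functions and their derivatives noted in the proof of Theorem 1, which keeps the resulting constants independent of $h$. The convective transport term $a_{TL}^T$ and the remaining cross terms are then bounded through the continuity estimates (\textbf{b}) and absorbed into the dissipation by Young's inequality.

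Finally I would assemble the estimate. The interpolation contributions are controlled by the standard estimate (27) with regularity $r=1$, whose squared factors $h^{2(r+1-l)}$ produce the $h^2$ term for $l=0,1$, while the temporal consistency error contributes the $dt^2$ term via (12). After choosing the Young parameters so that every gradient and $L^2$ error on the right is absorbed into the left-hand dissipation, multiplying by $dt$ and summing from $n=0$ to $N-1$, I would apply a discrete Gronwall argument, valid for the sufficiently small $dt$ assumed in the hypotheses, to eliminate the accumulated lower-order error terms. This yields $\|E^A_{\textbf{u}}\|^2_{\bar{\textbf{P}}}+\|E^A_p\|^2_{\bar{\textbf{Q}}}+\|E^A_c\|^2_{\bar{\textbf{P}}}\le\tilde{C}(\textbf{U})(h^2+dt^2)$, where $\tilde{C}(\textbf{U})$ gathers the norms of the exact solution that enter through the regularity assumptions (iv)--(v). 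Throughout, the chief difficulty remains ensuring that the constants multiplying the nonlinear cross terms stay independent of $h$ and $dt$, for which the inverse inequalities and the elementwise boundedness observation are indispensable.
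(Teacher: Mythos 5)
Your sketch reproduces the paper's skeleton where it overlaps with it (same error splitting, Results 1--2 for the temporal terms, coercivity of $B$, inf-sup recovery of the pressure, interpolation plus truncation-error assembly), but it has one genuine gap: you never estimate the stabilization terms that the ASGS formulation leaves in the error equation. Consistency of the residual-based method does \emph{not} make these terms disappear when you subtract (19) from the discrete equation satisfied by the exact solution; it converts them into differences $\mathcal{L}(\textbf{u}^{n},\mu^{n};\textbf{U}^{n+1})-\mathcal{L}(\textbf{u}^{n}_h,\mu^{n}_h;\textbf{U}^{n+1}_h)$ tested against $-\mathcal{L}^*(\textbf{u}_h,\mu_h;\textbf{V}_h)$ and against $\textbf{V}_h$, together with the dynamic-subscale memory term $\bar{\textbf{d}}$. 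These are precisely the terms $I_1^k,\dots,I_4^k$ in the paper's equation (30), whose estimation occupies (41)--(46) and is the distinctive content of the theorem for a stabilized method: it requires the elementwise boundedness observation, the convergence of the geometric series $\sum_i(\rho\tau_{1k}'/dt)^i$ and $\sum_i(\tau_{3k}'/dt)^i$ (this is where the hypothesis $dt>0$ and the structure of $\bar{\tau}_k$ enter), and, crucially, the scaling $\tau_1,\tau_3=O(h^2)$ of the stabilization parameters (16), without which these contributions would be $O(1)$ and the claimed $h^2$ rate would not follow. As written, your argument proves an estimate for a plain Galerkin scheme, not for formulation (19).

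Two smaller points of comparison. Your explicit expansion $\mu(c,J_2)=\eta(c)+\sqrt{2\tau_y\eta(c)}\,J_2^{-1/4}+\tfrac{\tau_y}{2}J_2^{-1/2}\ge\eta(c)\ge\eta_l$ is correct and in fact cleaner than the paper's lower bound $\mu_l$, which is a quotient involving bounds on $J_2$ and so needs assumption \textbf{(v)}. On the other hand, your proposed Lipschitz-type control of $\mu(c,J_2)-\mu(c_h,J_{2h})$ by multiples of $|e_c|$ and $|\nabla e_{\textbf{u}}|$ asks for more than the hypotheses supply: assumption \textbf{(i)} gives only continuity and two-sided boundedness of $\eta$, not Lipschitz continuity, and the negative powers of $J_2$ are Lipschitz only if the \emph{discrete} strain invariant $J_{2h}$ stays away from zero, which assumption \textbf{(v)} (a statement about the exact solution) does not guarantee. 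The paper sidesteps this entirely by the cruder bound of (39), which controls the viscosity-difference term through the uniform upper bound $\mu_u$; likewise it needs no discrete Gronwall lemma, since after the Young-parameter choices every error term on the right is absorbed directly into the left-hand dissipation.
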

\begin{proof}
Derivation of this auxiliary error bound goes in two parts: in the first part bound has been found out for auxiliary error corresponding to the velocity and concentration variables, whereas the second part comprises of estimation of auxiliary pressure term and the final result has been achieved on combining these two estimates. \vspace{2mm} \\
\textbf{First part:} Expanding the terms after subtracting fully-discrete $ASGS$ stabilized formulation (19) from fully-discrete version of weak formulation (8) for the exact solution we have $\forall \hspace{1mm} \textbf{V}_h \in V_h \times V_h \times Q_h \times V_h$
\begin{multline}
(M \partial_t(\textbf{U}^{n}-\textbf{U}^{n}_h),\textbf{V}_{h}) + \{ B(\textbf{u}^{n},  \mu^{n} ;\textbf{U}^{n+1}, \textbf{V}_h) -B(\textbf{u}_h^{n},  \mu_h^{n};\textbf{U}^{n+1}_h, \textbf{V}_h)\}+ \\
 \sum_{k=1}^{n_{el}}(\tau_k'(M\partial_t (\textbf{U}^n-\textbf{U}^n_h)+
 \mathcal{L}(\textbf{u}^{n},  \mu^{n} ;\textbf{U}^{n+1})-\mathcal{L}(\textbf{u}^{n}_h,  \mu_h^{n};\textbf{U}^{n+1}_h)),
-\mathcal{L}^* (\textbf{u}_h,  \mu_h;\textbf{V}_h))_{\Omega_k} \\
+\sum_{k=1}^{n_{el}}((I-\tau_k^{-1}\tau_k')(M \partial_t(\textbf{U}^{n}-\textbf{U}^{n}_h) + 
 \mathcal{L}(\textbf{u}^{n}, \mu^{n} ;\textbf{U}^{n+1})-\mathcal{L}(\textbf{u}^{n}_h,  \mu_h^{n} ;\textbf{U}^{n+1}_h)), -\textbf{V}_h)_{\Omega_k}\\
+ \sum_{k=1}^{n_{el}} (\tau_k^{-1}\tau_k' \textbf{d}, \textbf{V}_h)_{\Omega_k}+\sum_{k=1}^{n_{el}}(\tau_k' \textbf{d},-\mathcal{L}^*(\textbf{u}_h,  \mu_h ; \textbf{V}_h))_{\Omega_k}=(\textbf{TE}^{n+1}, \textbf{V}_h)
\end{multline}
On expanding out the terms in the matrix $\textbf{d}$= $\{\sum_{i=0}^{N}(\frac{1}{dt}M\tau_k')^i)(M\partial_t (\textbf{U}^n-\textbf{U}^n_h) + \mathcal{L}(\textbf{u}^{n},\mu^{n} ;\textbf{U}^{n+1})-
\mathcal{L}(\textbf{u}_h^{n}, \mu_h^{n} ;\textbf{U}_h^{n+1}) \}$.
 Now we apply (26) on each of the error terms appearing in (30), later we use result in (28) and property (P1) of projection operator for interpolation errors. Next to that rearranging the resultant equation in the following manner we have $\forall  \textbf{V}_h \in V_h \times V_h \times Q_h \times V_h$
\begin{multline}
 (M \frac{E^{A,n+1}_{\textbf{U}}-E^{A,n}_{\textbf{U}}}{dt}, \textbf{V}_h) + a_{NL}^F(\mu^n; E^{A,n+1}_{\textbf{u}}, \textbf{v}_h)+a_L^T(E^{A,n+1}_c,d_h)+ \int_{\Omega} \mu^n (E^{A,n+1}_{\textbf{u}})^2 \\
 = b(\textbf{v}_h, E^{A,n+1}_p) -b(E^{A,n+1}_{\textbf{u}},q_h) +b(\textbf{v}_h, E^{I,n+1}_p) -a_L^T(E^{I,n+1}_c,d_h)-  \\
\quad \{ a_{TL}^F(\textbf{u}^n, \textbf{u}^{n+1},\textbf{v}_h)-a_{TL}^F(\textbf{u}^n_h, \textbf{u}^{n+1}_h,\textbf{v}_h) \} + \int_{\Omega} \mu^n E^{A,n+1}_{\textbf{u}} \cdot E^{A,n+1}_{\textbf{u}}-\\
\quad \{a_{TL}^T(\textbf{u}^n,c^{n+1},d_h)- a_{TL}^T(\textbf{u}_h^n,c^{n+1}_h,d_h)\} - a_{NL}^F(\mu^n; E^{I,n+1}_{\textbf{u}}, \textbf{v}_h) -\\
\quad a_{NL}^F(\mu^n-\mu_h^n; \textbf{u}^{n+1}_h, \textbf{v}_h) -\sum_{k=1}^{n_{el}} (I_1^k+I_2^k+I_3^k+I_4^k)+ (\textbf{TE}^{n+1}, \textbf{V}_h) 
\end{multline}
where $I_i^k$, for $i=1,...,4$ denote the last four terms in the $LHS$ of (30) in short. The purpose of this rearrangement lies in combining the lower and upper bounds (to be found) for the terms in left hand side and right hand side of (31) respectively to arrive at the desired estimate. As the equation (31) holds for all $\textbf{V}_h \in V_h \times V_h \times  Q_h \times V_h$ we in particular replace $\textbf{v}_{h}, q_h,d_h$ by $E^{A,n+1}_{\textbf{u}},E^{A,n+1}_{p},E^{A,n+1}_{c}$ respectively. \vspace{1mm}\\
Let us begin the estimation by finding out a lower bound for the second term in the left hand side of (31).
\begin{equation}
\begin{split}
& a_{NL}^F(\mu^n; E^{A,n+1}_{\textbf{u}},  E^{A,n+1}_{\textbf{u}}) \\
 & = \int_{\Omega} \mu(c^n,J_2^n) \{2 (\frac{\partial E^{A,n+1}_{u1}}{\partial x})^2 +  ( \frac{\partial E^{A,n+1}_{u1}}{\partial y})^2+ (\frac{\partial E^{A,n+1}_{u2}}{\partial x})^2+ 2 (\frac{\partial E^{A,n+1}_{u2}}{\partial y})^2 \} \\
& \geq  \mu_l \mid E^{A,n+1}_{\textbf{u}} \mid_1^2
\end{split}
\end{equation}
where
\begin{equation}
\mu(c^n,J_2^n)=\frac{(\frac{\sqrt{\tau_y}}{\sqrt{2}} + \sqrt{\eta(c^n)} \{ 2 (\frac{\partial u_1^n}{\partial x_1})^2 + 2 (\frac{\partial u_2^n}{\partial x_2})^2 + (\frac{\partial u_1^n}{\partial x_2}+ \frac{\partial u_2^n}{\partial x_1})^2 \}^{\frac{1}{4}})^2}{\{ 2 (\frac{\partial u_1^n}{\partial x_1})^2 + 2 (\frac{\partial u_2^n}{\partial x_2})^2 + (\frac{\partial u_1^n}{\partial x_2}+ \frac{\partial u_2^n}{\partial x_1})^2 \}^{\frac{1}{2}}} 
\end{equation}
Now applying assumption $\textbf{(i)}$ on the viscosity coefficient $\eta(c)$ associated with the expression of $\mu(c,J_2)$ and assumption \textbf{(v)} on the various terms of $J_2$ we have the lower bound $\mu_l$ as follows:
\begin{center}
$\mu_l$= $\frac{ \{ \frac{\sqrt{\tau_y}}{\sqrt{2}}+ \sqrt{\eta_l} (C^u_l)^{\frac{1}{4}} \}^2}{(C^u_u)^{\frac{1}{2}}}$ 
\end{center}
where $C^u_l$ and $C^u_u$ denote respectively the sum of the lower bounds and upper bounds of all the combinations of $(\frac{\partial u_i}{\partial x_j} + \frac{\partial u_j}{\partial x_i})^2$  for $i,j=1,2$.  Applying result (29) on the first term and assumption $\textbf{(ii)}$ on the diffusion coefficients of the second term in the left hand side we have
\begin{equation}
\begin{split}
LHS & \geq \frac{\rho}{2 dt}  (\|E^{A,n+1}_{\textbf{u}}\|^2- \|E^{A,n}_{\textbf{u}}\|^2)+ \frac{1}{2 dt}(\|E^{A,n+1}_c\|^2- \|E^{A,n}_c\|^2)+ \mu_l  \| E^{A,n+1}_{\textbf{u}}\|_1^2 \\
& \quad +D_l \mid E^{A,n+1}_c \mid_1^2 + \alpha \|E^{A,n+1}_c\|^2
\end{split}
\end{equation}
where $D_l$= $ \underset{i=1,2}{min} \hspace{1mm} \underset{\Omega}{inf} D_i $. \\
Now our aim is to find upper bounds for the terms in the $RHS$ in (31).  Post substitution of $\textbf{V}_h$ we can see that the first two bilinear terms $b(\cdot,\cdot)$ get canceled out with each other and estimations of the remaining bilinear terms are quite straight-forward by applying the Cauchy-Schwarz inequality, Young's Inequality, standard interpolation estimate (24) and assumptions \textbf{(ii)} on the diffusion coefficients wherever they are required. Hence passing over the detailed derivations of those terms we only bring up here the final product.
\begin{equation}
\begin{split}
b(E^{A,n+1}_{\textbf{u}},E^{I,n+1}_p) &  \leq  \frac{1}{2 \epsilon_1} \mid E^{A,n+1}_{\textbf{u}} \mid_1^2+ \epsilon_1 C^2 h^2(\frac{1+\theta}{2}\| p^{n+1}\|_1 + \frac{1-\theta}{2}\| p^n \|_1)^2 \\
a_{L}^T(E^{I,n+1}_c, E^{A,n+1}_c) 
& \leq \frac{D_m}{2 \epsilon_2}  \mid E^{A,n+1}_{c} \mid_1^2+ \frac{D_m \epsilon_2}{2} C^2 h^2 (\frac{1+\theta}{2}\| c^{n+1} \|_2+\frac{1-\theta}{2}\| c^n \|_2)^2
\end{split}
\end{equation}
Again applying assumption $\textbf{(i)}$ on the viscosity coefficient and assumption \textbf{(v)} on the various terms of $J_2$ we can find an upper bound $\mu_u$ of $\mu(c,J_2)$ as follows:
\begin{center}
$\mu_u$= $\frac{ \{ \frac{\sqrt{\tau_y}}{\sqrt{2}}+ \sqrt{\eta_l} (C^u_u)^{\frac{1}{4}} \}^2}{(C^u_l)^{\frac{1}{2}}}$ 
\end{center}
Using $Poincare$ inequality
\begin{equation}
\int_{\Omega} \mu^n E^{A,n+1}_{\textbf{u}} \cdot E^{A,n+1}_{\textbf{u}} \leq \mu_u C_P \mid E^{A,n+1}_{\textbf{u}} \mid_1^2
\end{equation}
where $C_P$ is the $Poincare$ constant. Now the estimation of the trilinear terms $a_{TL}^F(\cdot,\cdot,\cdot)$  begins with splitting the error terms occurred during rearranging them. Applying properties \textbf{(a)} and $\textbf{(b)}$ given in (9) and the standard interpolation estimate (27) on each of the terms consecutively we have the following result:
\begin{equation}
\begin{split}
& \{ a_{TL}^F(\textbf{u}^n, \textbf{u}^{n+1},\textbf{v}_h)-a_{TL}^F(\textbf{u}^n_h, \textbf{u}^{n+1}_h,\textbf{v}_h) \} \\
& \leq \mid  a_{TL}^F(\textbf{u}^n, E^{I,n+1}_{\textbf{u}}, E^{A,n+1}_{\textbf{u}}) \mid + \mid  a_{TL}^F(\textbf{u}^n, E^{A,n+1}_{\textbf{u}}, E^{A,n+1}_{\textbf{u}}) \mid + \mid  a_{TL}^F(E^{I,n}_{\textbf{u}}, \textbf{u}^{n+1}, E^{A,n+1}_{\textbf{u}}) \mid \\
& \quad  + \mid  a_{TL}^F(E^{I,n}_{\textbf{u}}, E^{I,n}_{\textbf{u}}, E^{A,n+1}_{\textbf{u}}) \mid + \mid  a_{TL}^F(E^{I,n}_{\textbf{u}}, E^{A,n}_{\textbf{u}}, E^{A,n+1}_{\textbf{u}}) \mid +  \mid  a_{TL}^F(E^{A,n}_{\textbf{u}}, \textbf{u}^{n+1}, E^{A,n+1}_{\textbf{u}}) \mid \\
& \quad +  \mid  a_{TL}^F(E^{A,n}_{\textbf{u}},E^{I,n+1}_{\textbf{u}}, E^{A,n+1}_{\textbf{u}}) \mid +  \mid  a_{TL}^F(E^{A,n}_{\textbf{u}},E^{A,n+1}_{\textbf{u}}, E^{A,n+1}_{\textbf{u}}) \mid \\
& \leq h^2 \frac{C_2 \epsilon_3 }{2} \{ h^2( \|\textbf{u}^{n}\|+1)+1 \} (\frac{1+\theta}{2} \|\textbf{u}^{n+1}\|_2 + \frac{1-\theta}{2} \|\textbf{u}^{n}\|_2)^2 +\frac{3 C_2}{ 2 \epsilon_3} \|E^{A,n+1}_{\textbf{u}}\|_1^2 \\
\end{split}
\end{equation}
Estimation of another trilinear term $a_{TL}^T(\cdot,\cdot,\cdot)$ follows the same way in the beginning but final result is reached in the following as the consequence of exercising the Sobolev, the Cauchy-Schwarz and the Young inequalities subsequently.
\begin{equation}
\begin{split}
& \{a_{TL}^T(\textbf{u}^n,c^{n+1},E^{A,n+1}_c)- a_{TL}^T(\textbf{u}_h^n,c^{n+1}_h,E^{A,n+1}_c)\} \\
& \leq \mid a_{TL}^T(\textbf{u}^n, E^{I,n+1}_c,E^{A,n+1}_c) \mid + \mid a_{TL}^T(\textbf{u}^n, E^{A,n+1}_c,E^{A,n+1}_c) \mid + \mid a_{TL}^T(E^{I,n}_{\textbf{u}}, c^{n+1},E^{A,n+1}_c) \mid \\
& \quad +  \mid a_{TL}^T(E^{A,n}_{\textbf{u}}, c^{n+1},E^{A,n+1}_c) \mid +  \mid a_{TL}^T(E^{I,n}_{\textbf{u}}, E^{I,n+1}_c,E^{A,n+1}_c) \mid +  \mid a_{TL}^T(E^{I,n}_{\textbf{u}}, E^{A,n+1}_c,E^{A,n+1}_c) \mid\\
& \quad +  \mid a_{TL}^T(E^{A,n}_{\textbf{u}}, E^{I,n+1}_c,E^{A,n+1}_c) \mid +  \mid a_{TL}^T(E^{I,n}_{\textbf{u}}, E^{A,n+1}_c,E^{A,n+1}_c) \mid +  \mid a_{TL}^T(E^{A,n}_{\textbf{u}}, E^{A,n+1}_c,E^{A,n+1}_c) \mid \\
& \leq  \frac{C^2 h^2}{\epsilon_4} \{ 1+ C^2 h^2 \|\textbf{u}^n\|_2^2 \}(\frac{1+\theta}{2} \|c^{n+1}\|_2 + \frac{1-\theta}{2} \|c^{n}\|_2)^2 + 4\epsilon_4 \|E^{A,n+1}_c\|^2 + \frac{\bar{C_1}+ \bar{C_2}}{2}  \|E^{A,n+1}_c\|_1^2 \\
& \quad + \frac{C_1'}{2 \epsilon_4} \|\frac{\partial E^{A,n+1}_c}{\partial x}\|^2+  \frac{C_2'}{2 \epsilon_5} \|\frac{\partial E^{A,n+1}_c}{\partial y}\|^2 + \frac{C^4 h^4}{2 \epsilon_4} \{  \|c^{n+1}\|_2^2 +  (\frac{1+\theta}{2} \|\textbf{u}^{n+1}\|_2+  \frac{1-\theta}{2} \|\textbf{u}^{n}\|_2)^2 \}
\end{split}
\end{equation}
On expanding the estimation of the term $a_{NL}^F(\cdot; \cdot,\cdot)$ can be easily carried out with the help of above mentioned standard inequalities and the assumptions made by  us.
\begin{equation}
\begin{split}
a_{NL}^F(\mu^n; E^{I,n+1}_{\textbf{u}}, E^{A,n+1}_{\textbf{u}}) & \leq \mu_u \{ \frac{C^2 h^2}{\epsilon_5} (\frac{1+\theta}{2} \|\textbf{u}^{n+1}\|_2 + \frac{1-\theta}{2} \|\textbf{u}^{n}\|_2)^2  + \epsilon_5 \mid E^{A,n+1}_{\textbf{u}}\mid_1^2 \} \\
a_{NL}^F(\mu^n-\mu_h^n; \textbf{u}^{n+1}_h, E^{A,n+1}_{\textbf{u}}) & \leq \mid a_{PL}(\mu^{n}; \textbf{u}^{n+1}, E^{A,n+1}_\textbf{u}) - a_{PL}(\mu^{n}; E_\textbf{u}^{I,n+1}, E^{A,n+1}_\textbf{u}) \\
& \quad - a_{PL}(\mu^{n}; E_\textbf{u}^{A,n+1}, E^{A,n+1}_\textbf{u}) \mid  \\
& \leq  \mu_u \{ \frac{C^2 h^2}{\epsilon_5} (\frac{1+\theta}{2} \|\textbf{u}^{n+1}\|_2 + \frac{1-\theta}{2} \|\textbf{u}^{n}\|_2)^2  + \epsilon_5 \mid E^{A,n+1}_{\textbf{u}}\mid_1^2 \}
\end{split}
\end{equation}
Applying the Cauchy-Schwarz and Young's inequality once again to estimate the truncation error terms as follows:
\begin{equation}
\begin{split}
(\textbf{TE}^{n+1}, E^{A,n+1}_{\textbf{U}})
& \leq \frac{ \epsilon_6}{2} \|\textbf{TE}^{n+1}\|^2 + \frac{1}{2 \epsilon_6} (\|E^{A,n+1}_{\textbf{u}}\|^2 +\|E^{A,n+1}_{c}\|^2)
\end{split}
\end{equation}
Now we estimate the remaining terms which are defined on the sub domain $\Omega_k$ in a slightly different manner. Here we make use of an important observation which says, the choices of the finite element spaces $V_h$ and $Q_h$ enable us to assume every function belonging to that spaces along with their first and second order derivatives as bounded functions over each element sub domain.
\begin{equation}
\begin{split}
I_1^k & =\tau_k' (M \partial_t (\textbf{U}^n-\textbf{U}^n_h)+ \mathcal{L}(\textbf{u}^n, \mu^n ; \textbf{U}^{n+1})- \mathcal{L}(\textbf{u}_h^n, \mu_h^n; \textbf{U}_h^{n+1})), -\mathcal{L}^*(\textbf{u}_h, \mu_h; E^{A,n+1}_{\textbf{U}}))_{k} \\
& =\tau_{1k}' ( \partial_t (\textbf{u}^n-\textbf{u}^n_h)+ \rho ((\textbf{u}^n \cdot \nabla) \textbf{u}^{n+1}-(\textbf{u}^n_h \cdot \nabla) \textbf{u}^{n+1}_h) + \nabla (p^{n+1}-p^{n+1}_h)  - \nabla \cdot 2 \\
& \quad (\mu \textbf{D}(\textbf{u}^{n+1})-  \mu_h \textbf{D}(\textbf{u}^{n+1}_h)) , \rho(\textbf{u}^n \cdot \nabla) E^{A,n+1}_{\textbf{u}}- \nabla \cdot 2 \mu \textbf{D}(E^{A,n+1}_{\textbf{u}})+  \nabla E^{A,n+1}_p)_{k}+ \\
& \quad  \tau_{2k} (\nabla \cdot (\textbf{u}^{n+1}-\textbf{u}_h^{n+1}), \nabla \cdot E^{A,n+1}_{\textbf{u}})_{k} + \tau_{3k}' (\partial_t (c^{n+1}-c_h^{n+1})-  \nabla \cdot \tilde{\nabla} (c^{n+1}-c^{n+1}_h) \\
& \quad + \textbf{u}^n \cdot \nabla c^{n+1}- \textbf{u}^n_h \cdot \nabla c_h^{n+1} + \alpha (c^{n+1}-c^{n+1}_h), \nabla \cdot \tilde{\nabla} E^{A,n+1}_c + \textbf{u}^n \cdot \nabla E^{A,n+1}_c- \\
& \quad  \alpha E^{A,n+1}_c)_{k}
\end{split}
\end{equation}
where $ \textbf{D}(\textbf{u})$ denotes the deformation tensor. Now subsequently applying error splitting, the above observation on the terms belonging to the finite element spaces, the standard interpolation estimate (27) and the assumption $\textbf{(iv)}$ on the continuous solutions we have reached at the following result.
\begin{equation}
\begin{split}
\sum_{k=1}^{n_{el}} I_1^k &  \leq \mid \tau_1 \mid \bar{C}_1^1(h,\textbf{u}^{n+1},p^{n+1}) + h^2 \mid \tau_2 \mid  \bar{C}^1_2(\textbf{u}^{n+1}) + \mid \tau_3 \mid \bar{C}_3^1(h,c^{n+1})
\end{split}
\end{equation}
where the parameters $\bar{C}_i^1$ (for $i=1,2,3$) denote summations of positive constants, obtained as a consequence of exercising the observation on $E^{A,n+1}_{\textbf{U}}$ and its derivatives and assumption \textbf{(iv)} on the exact solutions.
\begin{equation}
\begin{split}
I_2^k & =((I- \tau_k^{-1} \tau_k') (M \partial_t (\textbf{U}^n-\textbf{U}^n_h)+ \mathcal{L}(\textbf{u}^n, \mu^n; \textbf{U}^{n+1})- \mathcal{L}(\textbf{u}_h^n, \mu_h^n; \textbf{U}_h^{n+1})),-E^{A,n+1}_{\textbf{U}})_{k} \\
\end{split}
\end{equation}
We see that the second $I^k$ term looks alike to the previous one and therefore we do not go into its expansion but let us have a glance into the coefficient term involving stabilization parameters.
\[
(I- \tau_k^{-1} \tau_k')=
  \begin{bmatrix}
 (1-\frac{dt}{dt+\rho \tau_{1k}})I_2 & 0 & 0 \\
 0 & 0 & 0 \\
   0 & 0 &  (1-\frac{dt}{dt+\tau_{3k}})
  \end{bmatrix} 
  =
    \begin{bmatrix}
 \frac{\rho \tau_{1k}}{dt+\rho \tau_{1k}} I_2 & 0 & 0 \\
 0 & 0 & 0 \\
   0 & 0 &  \frac{\tau_{3k}}{dt+\tau_{3k}}
  \end{bmatrix} 
\]
On expansion of the next term
\begin{equation}
\begin{split}
I_3^k & = (\tau_k^{-1}\tau_k' \textbf{d}, E^{A,n+1}_{\textbf{U}})_{k} \\
&= (\tau_k^{-1}\tau_k' \sum_{i=0}^{N} (\frac{1}{dt}M\tau_k')^i (M\partial_t (\textbf{U}^n-\textbf{U}^n_h) +  \mathcal{L}(\textbf{u}^n, \mu^n; \textbf{U}^{n+1})- \mathcal{L}(\textbf{u}_h^n, \mu_h^n; \textbf{U}_h^{n+1})),\\ & \quad E^{A,n+1}_{\textbf{U}})_{k} \\
& \leq  (\tau_k^{-1}\tau_k' \sum_{i=0}^{\infty} ((\frac{1}{dt}M\tau_k')^i)(M\partial_t (\textbf{U}^n-\textbf{U}^n_h) +  \mathcal{L}(\textbf{u}^n, \mu^n; \textbf{U}^{n+1})- \mathcal{L}(\textbf{u}_h^n, \mu_h^n; \textbf{U}_h^{n+1})), \\
& \quad E^{A,n+1}_{\textbf{U}})_{k} \\
& = \frac{\rho \tau_{1k}}{(dt+\rho \tau_{1k})}  ( \partial_t (\textbf{u}^n-\textbf{u}^n_h)+ \rho ((\textbf{u}^n \cdot \nabla) \textbf{u}^{n+1}-(\textbf{u}^n_h \cdot \nabla) \textbf{u}^{n+1}_h) + \nabla (p^{n+1}-p^{n+1}_h)  \\
& \quad  - \nabla \cdot 2(\mu \textbf{D}(\textbf{u}^{n+1})-  \mu_h \textbf{D}(\textbf{u}^{n+1}_h)) ,  E^{A,n+1}_{\textbf{u}})_{k}+ \frac{\tau_{3k}}{(dt+\tau_{3k})}(\partial_t (c^{n+1}-c_h^{n+1})- \nabla \cdot\\
& \quad  \tilde{\nabla} (c^{n+1}-c^{n+1}_h) +  \textbf{u}^n \cdot \nabla c^{n+1} -\textbf{u}^n_h \cdot \nabla c_h^{n+1} + \alpha (c^{n+1}-c^{n+1}_h), E^{A,n+1}_c)_{k} 
\end{split}
\end{equation}
For $dt>0$, $\frac{\rho \tau_{1k}}{dt+\rho \tau_{1k}} < 1$ and $\frac{\tau_{3k}}{dt+ \tau_{3k}} < 1$, which implies $\frac{\rho \tau_{1k}'}{dt} < 1$ and $\frac{\tau_{3k}'}{dt} < 1$ and therefore the series $\sum_{i=1}^{\infty}(\frac{\rho}{dt} \tau_{1k}' )^i$ and $\sum_{i=1}^{\infty}(\frac{1}{dt} \tau_{3k}' )^i$ converges to $\frac{\rho \tau_{1k}' }{(dt- \tau_{1k}' )}=\frac{\rho \tau_{1k}}{dt}$ and $\frac{\tau_{3k}' }{(dt- \tau_{3k}' )}=\frac{\tau_{3k}}{dt}$ respectively.
\begin{equation}
\begin{split}
I_4^k & =(\tau_k' \textbf{d},-\mathcal{L}^*(\textbf{u}_h, \eta_h; E^{A,n+1} _{\textbf{U}}))_{k} \\
& \leq \frac{\rho \tau_{1k}^2}{(dt+\rho \tau_{1k})}  ( \partial_t (\textbf{u}^n-\textbf{u}^n_h)+ \rho ((\textbf{u}^n \cdot \nabla) \textbf{u}^{n+1}-(\textbf{u}^n_h \cdot \nabla) \textbf{u}^{n+1}_h) + \nabla (p^{n+1}-p^{n+1}_h) - \\
& \quad  \nabla \cdot 2(\mu \textbf{D}(\textbf{u}^{n+1})-  \mu_h \textbf{D}(\textbf{u}^{n+1}_h)) ,  \rho(\textbf{u}^n \cdot \nabla) E^{A,n+1}_{\textbf{u}}+ \nabla E^{A,n+1}_p- \nabla \cdot 2 \mu \textbf{D}(E^{A,n+1}_{\textbf{u}}))_{k}   \\
& \quad + \frac{\tau_{3k}^2}{(dt+\tau_{3k})}(\partial_t (c^{n+1}-c_h^{n+1})- \nabla \cdot \tilde{\nabla} (c^{n+1}-c^{n+1}_h) + \textbf{u}^n \cdot \nabla c^{n+1}- \textbf{u}^n_h \cdot \nabla c_h^{n+1} \\
& \quad  + \alpha (c^{n+1}-c^{n+1}_h), \nabla \cdot \tilde{\nabla} E^{A,n+1}_c + \textbf{u}^n \cdot \nabla E^{A,n+1}_c - \alpha E^{A,n+1}_c)_{k}
\end{split}
\end{equation}
Clearly on expansion $I_3^k$ and $I_4^k$ look same as $I_2^k$ and $I_1^k$ respectively and hence the estimated results of the remaining terms are as follows:
\begin{equation}
\begin{split}
\sum_{k=1}^{n_{el}} I_2^k & \leq \mid \tau_1 \mid \bar{C}_1^2(h,\textbf{u}^{n+1},p^{n+1}) + \mid \tau_3 \mid \bar{C}_3^2(h,c^{n+1}) \\
\sum_{k=1}^{n_{el}} I_3^k  & \leq \mid \tau_1 \mid \bar{C}_1^3(h,\textbf{u}^{n+1},p^{n+1})  + \mid \tau_3 \mid \bar{C}_3^3(h,c^{n+1})\\
\sum_{k=1}^{n_{el}} I_4^k  & \leq \mid \tau_1 \mid \bar{C}_1^4(h,\textbf{u}^{n+1},p^{n+1})  + \mid \tau_3 \mid \bar{C}_3^4(h,c^{n+1})
\end{split}
\end{equation}
where the parameters $\bar{C}_1^i, \bar{C}_3^i$ for $i=2,3,4$ are obtained similarly. The term wise estimations are completed here. Now combining the results (34)-(46) in (31), arranging them suitably, multiplying both sides by 2$dt$ and taking summation over the time steps for n=0,1,...,$(N-1)$ to both the sides subsequently we finally arrive at the following:
\begin{multline}
\rho \sum_{n=0}^{N-1} (\|E^{A,n+1}_{\textbf{u}}\|^2- \|E^{A,n}_{\textbf{u}}\|^2)+   \sum_{n=0}^{N-1} (\|E^{A,n+1}_c\|^2- \|E^{A,n}_c\|^2)+ (2 \mu_l-\frac{1}{\epsilon_1}-\frac{3C_2}{\epsilon_3}-2C_2'-\\
\quad 4 \mu_u \epsilon_5-2 \mu_u C_P-\frac{1}{\epsilon_6})  \sum_{n=0}^{N-1} \mid E^{A,n+1}_{\textbf{u}}\mid_1^2 dt+ (2 \mu_l-\frac{3C_2}{\epsilon_3}-2C_2'-\frac{1}{\epsilon_6})  \sum_{n=0}^{N-1} \| E^{A,n+1}_{\textbf{u}}\|_1^2 dt+  \\
\quad \{D_l- \frac{D_m}{\epsilon_2}- (\bar{C}_1+\bar{C}_2) \epsilon_4-\frac{1}{\epsilon_6} \}  \sum_{n=0}^{N-1} \mid E^{A,n+1}_{c}\mid_1^2 dt + (2 \alpha- 4 \epsilon_4-\frac{1}{\epsilon_6})  \sum_{n=0}^{N-1} \| E^{A,n+1}_{c}\|_1^2 dt\\
\leq h^2  \sum_{n=0}^{N-1} [2\epsilon_1 C^2 (\frac{1+\theta}{2} \|p^{n+1}\|_1+\frac{1-\theta}{2} \|p^{n}\|_1)^2+C^2( D_m \epsilon_2+ \frac{\bar{C}_1+\bar{C}_2}{\epsilon_4}+ \frac{C^2 h^2}{\epsilon_4} \|\textbf{u}^n\|_2^2) \\
\quad (\frac{1+\theta}{2} \|c^{n+1}\|_2+\frac{1-\theta}{2} \|c^{n}\|_2)^2 + (h^2C_2 \epsilon_3+C_2 \epsilon_3 + \frac{C^2}{\epsilon_4} \|c^{n+1}\|_2^2+\frac{2\mu_u C^2}{\epsilon_5}) \\
\quad (\frac{1+\theta}{2} \|\textbf{u}^{n+1}\|_2+\frac{1-\theta}{2} \|\textbf{u}^{n}\|_2)^2 ] dt +  \mid \tau_1 \mid \sum_{n=0}^{N-1}  \sum_{i=1}^3
 \bar{C}_1^i(h,\textbf{u}^{n+1},p^{n+1}) dt+\\
  \mid \tau_2 \mid h^2  \sum_{n=0}^{N-1} \bar{C}^1_2(\textbf{u}^{n+1}) dt+ \mid \tau_3 \mid \sum_{n=0}^{N-1} \sum_{i=1}^3  \bar{C}_1^i(h,\textbf{u}^{n+1},p^{n+1}) dt +\epsilon_6 \sum_{n=0}^{N-1} \|\textbf{TE}^{n+1}\|^2 dt
\end{multline}
The presence of the arbitrary parameters $\epsilon_i$ for $i=1,2,...,6$ and other positive constants in the left hand side of (47) enable us to choose their values in such a way so that we can make all the coefficients positive. Now we divide both the sides with the minimum of the coefficients in $LHS$. Applying the result (12) on the truncation error terms we finally arrive at the following expression using the fact that $\tau_1$ and $\tau_3$ are of order $h^2$:
\begin{equation}
\boxed{ \|E^{A}_{\textbf{u}}\|_{\bar{\textbf{P}}}^2+ \|E^{A}_{c}\|_{\bar{\textbf{P}}}^2 \leq C(T,\textbf{u},p,c) (h^2+dt^{2})}
\end{equation}
Here we use the fact: $\sum_{n=0}^{N-1} \int_{t^n}^{t^{n+1}} M dt \leq M T  $. This completes the first part of the proof. \vspace{2mm}\\
\textbf{Second part:} For estimating auxiliary pressure error we begin with the Galerkin orthogonality result for the flow problem as follows:
\begin{multline}
 b(\textbf{v}_h,p-I^h_p p)+ b(\textbf{v}_h,I^h_p p- p_h)=(\partial_t E^A_{\textbf{u}}, \textbf{v}_h) + a^F_{TL}(\textbf{u},\textbf{u},\textbf{v}_h)
-a^F_{TL}(\textbf{u}_h,\textbf{u}_h,\textbf{v}_h) \\
+ a_{NL}^F(\mu ;\textbf{u},\textbf{v}_h)-a_{NL}^F(\mu_h ;\textbf{u}_h,\textbf{v}_h)
\end{multline}
Applying the inclusion $\bigtriangledown \cdot V_h \subset Q_h$ and the property of the $L^2$ orthogonal projection of $I^h_p p$ we have 
\begin{equation}
b(\textbf{v}_h, p-I^h_p p)= \int_{\Omega}(p-I^h_p p)(\bigtriangledown \cdot \textbf{v}_h)=0
\end{equation}
Now according to discrete inf-sup condition 
\begin{equation}
\begin{split}
\|I^h_p p-p_h\|_{\bar{\textbf{P}}}^2 = \|E_p^A\|_{\bar{\textbf{P}}}^2 
& =  \sum_{n=0}^{N-1}  \|E_p^{A,n+1}\|^2 dt \\
& \leq  \sum_{n=0}^{N-1} \underset{\textbf{v}_h}{sup} \frac{b(\textbf{v}_h, E_p^{A,n+1})}{\|\textbf{v}_h\|_1} dt
\end{split}
\end{equation}
Applying (50) on (49) we have
\begin{equation}
\begin{split}
\sum_{n=0}^{N-1} b(\textbf{v}_h, E_p^{A,n+1}) dt & = \sum_{n=0}^{N-1} \{ (\frac{E^{A,n+1}_{\textbf{u}}-E^{A,n}_{\textbf{u}}}{dt},\textbf{v}_{h})+ a^F_{TL}(\textbf{u}^n,\textbf{u}^{n+1},\textbf{v}_h) - \\
& \quad a^F_{TL}(\textbf{u}_h^n,\textbf{u}_h^{n+1},\textbf{v}_h)
+ a_{NL}^F(\mu^n ;\textbf{u}^{n+1},\textbf{v}_h)-a_{NL}^F(\mu_h^n ;\textbf{u}_h^{n+1},\textbf{v}_h)\}dt
\end{split}
\end{equation}
Deriving term wise estimation as we have done in the previous part and substituting the result in (51) we have
\begin{equation}
\|I_hp-p_h\|_{\bar{\textbf{Q}}}^2 \leq C(T,\textbf{u},p,c) (h^2+dt^{2})
\end{equation}
Now combining the results obtained in the first and second part we have finally arrived at the auxiliary error estimate as follows
\begin{equation}
\|E^A_{\textbf{u}}\|^2_{\bar{\textbf{P}}} + \|E^A_p\|_{\bar{\textbf{Q}}}^2  + \|E^A_{c}\|^2_{\bar{\textbf{P}}} \leq C(T,\textbf{u},p,c) (h^2+ dt^{2})
\end{equation}
This completes the proof.
\end{proof}
\begin{theorem}(Apriori error estimate)
Assuming the same condition as in the previous theorem, 
\begin{equation}
\|\textbf{u}-\textbf{u}_h\|_{\bar{\textbf{P}}}^2+\|p-p_h\|_{\bar{\textbf{Q}}}^2 + \|c-c_h\|^2_{\bar{\textbf{P}}} \leq C' (h^2+ dt^{2})
\end{equation}
where $C'$ depends on T, $\textbf{u}$,p,c.
\end{theorem}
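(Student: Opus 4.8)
The plan is to reduce the full error to an interpolation error plus an auxiliary error, and then to control each piece by a result already in hand: the auxiliary bound of Theorem 2 for the auxiliary part, and the standard interpolation estimate (27) for the interpolation part. First I would invoke the additive splitting of the error into interpolation and auxiliary components, $e_{\textbf{u}}=E^I_{\textbf{u}}+E^A_{\textbf{u}}$, $e_p=E^I_p+E^A_p$, $e_c=E^I_c+E^A_c$, and apply the elementary inequality $\|a+b\|^2\le 2\|a\|^2+2\|b\|^2$ inside each of the space--time norms $\|\cdot\|_{\bar{\textbf{P}}}$ and $\|\cdot\|_{\bar{\textbf{Q}}}$; since these norms are assembled from squared $L^2$ and $H^1$ quantities (the $\max_n$ part via subadditivity of the maximum, the $dt\sum_n$ part termwise), the inequality transfers cleanly. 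This yields
\[
\|\textbf{u}-\textbf{u}_h\|_{\bar{\textbf{P}}}^2+\|p-p_h\|_{\bar{\textbf{Q}}}^2+\|c-c_h\|_{\bar{\textbf{P}}}^2 \le 2\big(\|E^I_{\textbf{u}}\|_{\bar{\textbf{P}}}^2+\|E^I_p\|_{\bar{\textbf{Q}}}^2+\|E^I_c\|_{\bar{\textbf{P}}}^2\big)+2\big(\|E^A_{\textbf{u}}\|_{\bar{\textbf{P}}}^2+\|E^A_p\|_{\bar{\textbf{Q}}}^2+\|E^A_c\|_{\bar{\textbf{P}}}^2\big).
\]
The auxiliary group on the right is bounded immediately by Theorem 2 by $2\tilde C(\textbf{U})(h^2+dt^2)$, so the whole task reduces to estimating the interpolation group.

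For the interpolation group I would evaluate the spatial estimate (27) at each time level and then reassemble the time--discrete norm. Recalling that $\|\cdot\|_{\bar{\textbf{P}}}^2$ is the sum of a $\max_n\|\cdot^{n+1}\|^2$ contribution and a $dt\sum_n(\|\cdot^{n+1}\|^2+|\cdot^{n+1}|_1^2)$ contribution, the dominant terms are the $H^1$--seminorm parts. Using the regularity from assumption \textbf{(iv)}, namely $\textbf{u},c\in L^{\infty}(0,T;H^2(\Omega))$ and $p\in L^{\infty}(0,T;H^1(\Omega))$, estimate (27) gives at each time level $|E^I_{\textbf{u}}|_1\le C h\,\|\textbf{u}\|_2$ and $|E^I_c|_1\le C h\,\|c\|_2$ for the degree $l$ velocity and concentration interpolants, while the corresponding $L^2$ parts are of the higher order $h^2$; for the pressure, interpolated from the degree $l-1$ space $Q_h$, the $L^2$ estimate entering $\|\cdot\|_{\bar{\textbf{Q}}}$ is likewise of order $h$. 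I would then bound the $\max_n$ aggregate by the essential supremum in time of the relevant spatial Sobolev norm (finite by \textbf{(iv)}) and the factor $dt\sum_{n=0}^{N-1}$ by $T$, so that each interpolation contribution is controlled by $C\,h^2$ times a finite norm of the exact solution, uniformly in the number of time steps $N$.

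Adding the two groups gives the asserted estimate with a constant $C'$ depending on $T$ and on the Sobolev norms of $\textbf{u},p,c$, exactly the dependence recorded in the statement. The main obstacle here is organizational rather than analytical: I must translate the purely spatial, pointwise--in--time interpolation estimate (27) into the space--time norms $\bar{\textbf{P}}$ and $\bar{\textbf{Q}}$ while keeping the convergence orders balanced across the three solution components, and verify that assumption \textbf{(iv)} supplies precisely the regularity ($H^2$ for $\textbf{u}$ and $c$, $H^1$ for $p$) needed to render the $\max_n$ and $dt\sum_n$ aggregates finite and $N$--independent. No new difficulty beyond Theorem 2 arises, since the triangle--inequality step confines all the genuine estimation work to the already--established auxiliary bound.
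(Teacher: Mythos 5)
Your proposal follows exactly the paper's own argument: split the error into interpolation and auxiliary parts, bound the auxiliary group by Theorem 2, and bound the interpolation group by estimate (27) together with the regularity of assumption \textbf{(iv)}. The paper states this in one line via the triangle inequality; your write-up merely fills in the (correct) details of transferring the pointwise-in-time interpolation bounds into the $\bar{\textbf{P}}$ and $\bar{\textbf{Q}}$ norms.
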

\begin{proof}
By applying triangle inequality, the interpolation inequalities and the result of the previous theorem we will have,
\begin{equation}
\begin{split}
& \|\textbf{u}-\textbf{u}_h\|_{\bar{\textbf{P}}}^2+\|p-p_h\|_{\bar{\textbf{Q}}}^2 + \|c-c_h\|^2_{\bar{\textbf{P}}} \\
& \leq \bar{C} (\|E^I_{\textbf{u}}\|_{\bar{\textbf{P}}}^2 +\|E^I_{p}\|_{\bar{\textbf{Q}}}^2 +  \|E^I_{c}\|_{\bar{\textbf{P}}}^2+ 
 \|E^A_{\textbf{u}}\|_{\bar{\textbf{P}}}^2 +\|E^A_{p}\|_{\bar{\textbf{Q}}}^2 +  \|E^A_{c}\|_{\bar{\textbf{P}}}^2\\
& \leq C'(T,\textbf{u},p,c)(h^2+ dt^{2})
\end{split}
\end{equation}
This completes apriori error estimation.
\end{proof} 

\subsection{Aposteriori error estimation}
\begin{theorem} 
 Assuming the viscosity,diffusion, density and reaction coefficients satisfying the assumptions \textbf{(i)}-\textbf{(iii)} and considering adequately small time step $dt(>0)$, then for sufficiently regular continuous solutions $(\textbf{u},p,c)$ satisfying the assumptions \textbf{(iv)}-\textbf{(v)} and the computed solutions $(\textbf{u}_h,p_h,c_h) \in$ $V_h \times V_h \times Q_h \times V_h$ satisfying (19),  there exists a constant $\tilde{C}(\textbf{R}_h)$, depending upon the computed solution such that
\begin{equation}
 \|\textbf{u}-\textbf{u}_h\|_{\bar{\textbf{P}}}^2+\|p-p_h\|_{\bar{\textbf{Q}}}^2 + \|c-c_h\|^2_{\bar{\textbf{P}}} \leq \bar{C}(\textbf{R}_h) (h^2+ dt^{2})
\end{equation}
where $\textbf{R}_h$ is the residual, defined in (15)
\end{theorem}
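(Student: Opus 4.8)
The plan is to reproduce, almost line for line, the two-part argument that proves the auxiliary estimate of Theorem 2 and then the apriori estimate of Theorem 3, but to reinterpret every ``data-type'' quantity so that the resulting constant is controlled by the computed residual $\textbf{R}_h$ rather than by the regularity norms of the exact solution. First I would start again from the error equation obtained by subtracting the fully-discrete stabilized formulation (19) from the fully-discrete weak form (8), perform the interpolation/auxiliary splitting via (26) and Result 1, and test against $(E^{A,n+1}_{\textbf{u}},E^{A,n+1}_p,E^{A,n+1}_c)$. The lower bound for the left-hand side is kept verbatim from (32)--(34): coercivity of $a_{NL}^F$ supplies $\mu_l$, assumption \textbf{(ii)} supplies $D_l$, the reaction term supplies $\alpha$, and Result 2 supplies the telescoping time-difference contributions. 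Nothing changes on this side.

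The essential departure occurs in the element-wise terms $I_1^k,\ldots,I_4^k$ of (41)--(46). The key observation is that the quantity sitting inside each of them, namely $M\partial_t(\textbf{U}^n-\textbf{U}^n_h)+\mathcal{L}(\textbf{u}^n,\mu^n;\textbf{U}^{n+1})-\mathcal{L}(\textbf{u}^n_h,\mu^n_h;\textbf{U}^{n+1}_h)$, is, up to the truncation error of (12), exactly the residual $\textbf{R}_h$ of (15), because the exact solution obeys the strong form $\textbf{F}=M\partial_t\textbf{U}+\mathcal{L}(\textbf{u},\mu;\textbf{U})$. Hence, instead of bounding these terms through the interpolation estimate (27) and the exact-solution norms $\|\textbf{u}^{n+1}\|_2,\|p^{n+1}\|_1,\|c^{n+1}\|_2$ (which produced the $\textbf{U}$-dependent constants $\bar{C}^i_j$ in the apriori proof), I would apply Cauchy--Schwarz on each $\Omega_k$, use the boundedness observation on $E^{A,n+1}_{\textbf{U}}$ and its derivatives, and bound everything directly by $\|\textbf{R}_h\|$. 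Since $\tau_1,\tau_3$ are of order $h^2$, each contribution carries the factor $h^2$, and the geometric series inside $\bar{\textbf{d}}$ are summed to $\tfrac{\rho\tau_{1k}}{dt}$ and $\tfrac{\tau_{3k}}{dt}$ precisely as in (44). This single reinterpretation is what shifts the dependence from the exact solution to the computable residual and produces the constant $\bar{C}(\textbf{R}_h)$ of (57).

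The remaining bilinear, trilinear and nonlinear-viscosity terms would be handled exactly as in (35)--(40): the convective trilinear differences and the mismatch $a_{NL}^F(\mu^n-\mu^n_h;\cdot,\cdot)$ are split by error decomposition, the genuinely residual-driven parts reabsorbed into $\|\textbf{R}_h\|$, the interpolation pieces supplying $h^2$ through (27) and the truncation pieces supplying $dt^2$ through (12). After collecting all estimates one chooses the arbitrary parameters $\epsilon_i$ so that every coefficient of $\mid E^{A,n+1}_{\textbf{u}}\mid_1^2$, $\|E^{A,n+1}_{\textbf{u}}\|_1^2$, $\mid E^{A,n+1}_c\mid_1^2$, $\|E^{A,n+1}_c\|_1^2$ on the left is strictly positive, multiplies by $2\,dt$, sums over $n=0,\ldots,N-1$, and closes by a discrete Gr\"onwall argument together with $\tilde{\textbf{U}}|_{t=0}=0$. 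The auxiliary pressure error is then controlled through the discrete inf-sup condition as in the second part of Theorem 2, with $\textbf{R}_h$ replacing the exact-solution data. Finally the triangle inequality $\|\textbf{u}-\textbf{u}_h\|\le\|E^I_{\textbf{u}}\|+\|E^A_{\textbf{u}}\|$ (and likewise for $p$ and $c$), used exactly as in (56), together with (27) delivers (57).

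The main obstacle I expect is keeping the constant purely residual-dependent in the presence of the nonlinearities. The convective differences $a^F_{TL}(\textbf{u}^n,\textbf{u}^{n+1},\cdot)-a^F_{TL}(\textbf{u}^n_h,\textbf{u}^{n+1}_h,\cdot)$ and the viscosity mismatch $a_{NL}^F(\mu^n-\mu^n_h;\textbf{u}^{n+1}_h,\cdot)$ naturally generate factors carrying the exact velocity and concentration; to recast them through $\textbf{R}_h$ one must exploit the Lipschitz dependence of $\mu(c,J_2)$ on its arguments, guaranteed by assumptions \textbf{(i)} and \textbf{(v)}, and absorb the error-quadratic remainders into the coercive left-hand side, which is only legitimate for sufficiently small $dt$ and with the $\epsilon_i$ fixed \emph{after} the coercivity constants $\mu_l,D_l,\alpha$. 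Verifying that this absorption still leaves strictly positive coefficients, so that the discrete Gr\"onwall step closes, is the delicate point of the argument.
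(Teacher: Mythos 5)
Your reinterpretation of the element-wise stabilization terms $I_1^k,\ldots,I_4^k$ as inner products against $\textbf{R}_h$ is correct as far as it goes (the paper itself does exactly this in its equations (64)--(66), where $\textbf{d}$ is resummed as a geometric series in $\textbf{R}_h$), but it cannot by itself deliver the theorem, and this is where your proposal has a genuine gap. In the auxiliary error equation (30)--(31) the residual appears \emph{only} through those stabilization terms, which carry the weights $\tau_1,\tau_2,\tau_3=O(h^2)$. The dominant contributions to the error --- the interpolation terms $b(\textbf{v}_h,E^{I,n+1}_p)$, $a_L^T(E^{I,n+1}_c,d_h)$, $a_{NL}^F(\mu^n;E^{I,n+1}_{\textbf{u}},\textbf{v}_h)$ and the convective differences split as in (37)--(38) --- are bounded, in the argument you propose to reproduce, by the interpolation estimate (27) together with the exact-solution norms $\|\textbf{u}^{n+1}\|_2$, $\|p^{n+1}\|_1$, $\|c^{n+1}\|_2$. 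No Lipschitz property of $\mu(c,J_2)$ converts such terms into residual-dependent quantities: a term like $a^F_{TL}(E^{I,n}_{\textbf{u}},\textbf{u}^{n+1},E^{A,n+1}_{\textbf{u}})$ simply contains no residual information, and absorption into the coercive left-hand side only disposes of error-quadratic pieces, not of pieces carrying exact-solution norms. Consequently your final constant has the form $C(\textbf{U})+C(\textbf{R}_h)$, i.e.\ it is the apriori estimate of Theorems 2--3 with redundant residual terms appended, not an aposteriori bound in which the computable residual carries the error information. You identified this obstacle yourself in your closing paragraph, but the fix you offer does not resolve it.

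The idea your proof is missing is the paper's relation (62): testing the \emph{total}-error equation against arbitrary continuous test functions $\textbf{V}\in\bar{\textbf{V}}$ and integrating the form $B(\cdot,\cdot;\cdot,\cdot)$ by parts element-wise, so that the entire operator difference --- convective, viscous and pressure parts included --- collapses to $\int_\Omega \textbf{R}^{n+1}_h\cdot\textbf{V}$. Accordingly, the paper never works with the auxiliary error equation in this proof: it starts from the coercivity bound (58) tested with $\textbf{e}_{\textbf{U}}$, and splits the right-hand side into three parts according to the \emph{second} argument, cf.\ (61). The part tested against $E^{I,n+1}_{\textbf{U}}$ becomes $\int_\Omega\textbf{R}^{n+1}_h\cdot E^{I,n+1}_{\textbf{U}}\le h^2\bar{C}_1\|\textbf{R}^{n+1}_h\|$ via (62)--(63); the part tested against $E^{A,n+1}_{\textbf{U}}$ is rewritten through the discrete stabilized equation (64), producing the $\tau$-weighted residual terms you did capture; and the remaining error-quadratic part $RHS(R)$ is bounded by $(C_1'+C_2'\mu_u)\|e^{n+1}_{\textbf{u}}\|_1^2$, with constants depending on the \emph{computed} solution, and absorbed into the left-hand side. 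This three-way splitting, with (62) doing the work for the interpolation-tested part, is precisely the mechanism that replaces exact-solution norms by $\|\textbf{R}_h\|$. Within your auxiliary-error framework that relation is unavailable --- by the time you reach (31) the operator difference has been dismantled, with coercive pieces moved to the left and others cancelled --- so the residual can never be brought in front of the dominant terms, and the estimate you would obtain is not the one asserted.
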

\begin{proof}
Estimation of $aposteriori$ error proceeds in the same way as we have done during $auxiliary$ error estimation:
first part contains derivation of $velocity$ and $concentration$ error bound whereas the later part shows estimation of  $pressure$ error. \\
\textbf{First part:} $\forall \hspace{1mm} \textbf{V}$ $\in \bar{\textbf{V}}$
\begin{equation}
\begin{split}
\mu_l \mid \textbf{v} \mid_1^2 + D_{l} \mid d \mid_1^2 + \alpha \|d\|^2 +2 \int_{\Omega} \mu \frac{\partial v_1}{\partial y} \frac{\partial v_2}{\partial x} \leq B(\textbf{u},\mu ;\textbf{V},\textbf{V}) 
\end{split}
\end{equation}
Since the error $\textbf{e}_{\textbf{U}}$ belongs to $ \bar{\textbf{V}}$, replacing the test function $\textbf{V}$ by $\textbf{e}_{\textbf{U}}$ in particular and adding the time derivative terms to the both sides of (59) we have modified (59) as follows:
\begin{multline}
\underbrace{\rho (\frac{\partial e_{\textbf{u}}}{\partial t},e_{\textbf{u}})+ (\frac{\partial e_{c}}{\partial t},e_{c})+\mu_l \| e_{\textbf{u}} \|_1^2+  D_{l} \mid e_c\mid_1^2 + \alpha \|e_c\|^2 }_\textit{LHS}\\
 \leq \underbrace{ ( M \frac{\partial \textbf{e}_{\textbf{U}}}{\partial t}, \textbf{e}_{\textbf{U}}) + B(\textbf{u},\eta(c,\textbf{u});\textbf{e}_{\textbf{U}},\textbf{e}_{\textbf{U}}) +
 2 \int_{\Omega} \mid \mu  \frac{\partial e_{u1}}{\partial y} \frac{\partial e_{u2}}{\partial x} \mid}_\textit{RHS}
\end{multline}
Let us give here a very brief  idea about the proof: we first find lower bound of $\textit{LHS}$ followed by deriving term wise upper bound  for $\textit{RHS}$  and finally combine them to arrive at the desired estimate. Applying $backward$ $Euler$ time discretization rule and second result given by (29) on the first two expressions of the $\textit{LHS}$ involving the time derivative terms we have (60) as follows:
\begin{multline}
 \frac{\rho}{2 dt}(\|e_{\textbf{u}}^{n+1}\|^2-\|e_{\textbf{u}}^n\|^2)+\frac{1}{2 dt}(\|e_{c}^{n+1}\|^2-\|e_{c}^n\|^2)+\mu_l \| e_{\textbf{u}}^{n+1} \|_1^2+  D_{l} \mid e_c^{n+1}\mid_1^2 + \alpha \|e_c^{n+1}\|^2\\
  \leq \textit{LHS} \leq \textit{RHS}
\end{multline}
In order to find upper bound for $\textit{RHS}$ we divide it into three broad parts through splitting the errors in the second argument in the following way:
\begin{equation}
\begin{split}
RHS &= [ ( M \frac{\partial \textbf{e}^n_{\textbf{U}}}{\partial t}, E^{I,n+1}_{\textbf{U}})+ B(\textbf{u}^n,\eta^n; \textbf{U}^{n+1}, E^{I,n+1}_{\textbf{U}}) - B(\textbf{u}^n_h,\eta^n_h; \textbf{U}^{n+1}_h, E^{I,n+1}_{\textbf{U}})] + \\
& \quad   [( M \frac{\partial \textbf{e}^n_{\textbf{U}}}{\partial t}, E^{I,n+1}_{\textbf{U}})+  B(\textbf{u}^n,\eta^n; \textbf{U}^{n+1}, E^{A,n+1}_{\textbf{U}}) - B(\textbf{u}^n_h,\eta^n_h; \textbf{U}^{n+1}_h, E^{A,n+1}_{\textbf{U}})] + \\
& \quad  [ 2 \int_{\Omega} \mid \eta^n   \frac{\partial e_{u1}^{n+1}}{\partial y} \frac{\partial e_{u2}^{n+1}}{\partial x} \mid -a_{TL}^F(e_{\textbf{u}}^{n},\textbf{u}_h^{n+1}, e_{\textbf{u}}^{n+1}) + a_{NL}^F (\mu^n-\mu_h^n; \textbf{u}^{n+1}_h, e_{\textbf{u}}^{n+1})\\
& \quad -a_{TL}^T(e_{\textbf{u}}^n, c_h^{n+1},e_c^{n+1})]\\
& = RHS(I)+ RHS(A)+RHS(R)
\end{split}
\end{equation}
According to the formation of the three parts, the alphabets $I,A,R$ stand for $Interpolation$, $Auxiliary$ and $Remaining$ respectively. Now we are aiming to bring residual into context and for that purpose on integrating the terms in $B(\cdot,\cdot; \cdot, \cdot)$  we have the following relation $\forall$ $\textbf{V} \in \bar{\textbf{V}}$
\begin{multline}
 (M \frac{\textbf{e}_{\textbf{u}}^{n+1}- \textbf{e}_{\textbf{u}}^{n}}{dt}, \textbf{V})+ B(\textbf{u}^n,\mu^n; \textbf{U}^{n+1}, \textbf{V})- B(\textbf{u}^n_h,\mu^n_h; \textbf{U}^{n+1}_h, \textbf{V}) =  \int_{\Omega} \textbf{R}^{n+1}_h \cdot \textbf{V}
\end{multline}
On substituting $\textbf{V}$ in the above expressions by $E^{I,n+1}_{\textbf{U}}$, applying the Cauchy-Schwarz inequality, the standard interpolation estimate and assumption \textbf{(iv)} on the exact solutions subsequently we have the estimated result for $RHS(I)$ as follows:
\begin{equation}
\begin{split}
RHS(I) & = \int_{\Omega} \textbf{R}_h^{n+1} \cdot E^{I,n+1}_{\textbf{U}}
 \leq h^2 \bar{C}_1 \|\textbf{R}_h^{n+1}\|  \\
\end{split}
\end{equation}
The parameters $\bar{C}_1$ comes after applying assumption  \textbf{(iv)} and the interpolation estimate (27). To estimate $RHS(A)$ let us bring here (30) in the following way: $\forall$ $\textbf{V}_h \in \bar{\textbf{V}}_h$
\begin{multline}
(M \frac{\textbf{e}_{\textbf{u}}^{n+1}- \textbf{e}_{\textbf{u}}^{n}}{dt}, \textbf{V}_h)+ B(\textbf{u}^n,\mu^n; \textbf{U}^{n+1}, \textbf{V}_h)- B(\textbf{u}^n_h,\mu^n_h; \textbf{U}^{n+1}_h, \textbf{V}_h)\\
= \sum_{k=1}^{n_{el}} \{(\tau_k' \textbf{R}_h^{n+1}, \mathcal{L}^*(\textbf{u}_h,\mu_h;\textbf{V}_h))_{\Omega_k}+(\tau_k' \textbf{d},\mathcal{L}^*(\textbf{u}_h,\mu_h;\textbf{V}_h))_{\Omega_k} +  \\
((I-\tau_k^{-1}\tau_k) \textbf{R}_h^{n+1}, \textbf{V}_h)_{\Omega_k} +(\tau_k^{-1}\tau_k \textbf{d}, \textbf{V}_h)_{\Omega_k} \} +(\textbf{TE}^{n+1}, \textbf{V}_h)
\end{multline}
where  the matrix $\textbf{d}$= $\sum_{i=0}^{N}(\frac{1}{dt}M\tau_k')^i(\textbf{F} -M\partial_t \textbf{U}_h - \mathcal{L}(\textbf{u}_h;\textbf{U}_h))=\sum_{i=0}^{N}(\frac{1}{dt}M\tau_k')^i \textbf{R}_h$.
On substituting $\textbf{V}_h$ by $E^{A,n+1}_{\textbf{U}}$ in the above equation we have
\begin{equation}
\begin{split}
RHS(A) & =  \sum_{k=1}^{n_{el}} \{(\tau_k' \textbf{R}_h^{n+1}, \mathcal{L}^*(\textbf{u}_h,\mu_h ;E^{A,n+1}_{\textbf{U}}))_{\Omega_k}+(\tau_k' \textbf{d},\mathcal{L}^*(\textbf{u}_h,\mu_h ;E^{A,n+1}_{\textbf{U}}))_{\Omega_k} +  \\
& \quad  ((I-\tau_k^{-1}\tau_k) \textbf{R}_h^{n+1}, E^{A,n+1}_{\textbf{U}})_{\Omega_k} +(\tau_k^{-1}\tau_k \textbf{d}, E^{A,n+1}_{\textbf{U}})_{\Omega_k} \} +(\textbf{TE}^{n+1},E^{A,n+1}_{\textbf{U}})
\end{split}
\end{equation}
Let us see the residual term here more explicitly.
\[
\textbf{R}_h=
  \begin{bmatrix}
 \textbf{f}-\{ \rho \frac{\partial \textbf{u}_h}{\partial t} + \rho (\textbf{u}_h \cdot \bigtriangledown) \textbf{u}_h + \bigtriangledown p_h- \nabla \cdot 2 \mu_h \textbf{D}(\textbf{u}_h)  \} \\
    -\bigtriangledown \cdot \textbf{u}_h \\
 g-(\frac{\partial c_h}{\partial t} - \bigtriangledown \cdot \tilde{\bigtriangledown} c_h + \textbf{u} \cdot \bigtriangledown c_h + \alpha c_h )
  \end{bmatrix}
   = 
  \begin{bmatrix}
 \textbf{R}_{1h}\\
  R_{2h} \\
  R_{3h}
  \end{bmatrix}
\]
where $\textbf{f}=[f_1,f_2]^T$.
The estimation of the terms in $RHS(A)$ follows the same way as we have done in the derivation of auxiliary $apriori$ error estimate. Here we use the observation made upon the bounded property of the elements belonging to finite element spaces $V_h$ and $Q_h$ over each element sub-domain $\Omega_k$ for $k=1,2,...,n_{el}$. On expanding each of the terms, applying that observation on the auxiliary error parts and assumption \textbf{(i)} on the viscosity expression, we have the estimations as follows:
\begin{equation}
\begin{split}
 RHS(A) & \leq  \mid \tau_1 \mid  \sum_{n=0}^{N-1} \sum_{i=1}^4 \tilde{C}_1^i \|\textbf{R}_{1h}^{n+1} \| dt +  h^2 \frac{ \mid \tau_2 \mid}{ \epsilon_1' } \tilde{C}_1^2+ \mid \tau_3 \mid  \sum_{n=0}^{N-1} \sum_{i=1}^4 \tilde{C}_3^i \|R_{3h}^{n+1} \| \\
 & \quad + \mid (\textbf{TE}^{n+1},E^{A,n+1}_{\textbf{U}}) \mid
\end{split}
\end{equation}
where the parameters $\tilde{C}_1^i, \tilde{C}_3^i$ for $i=1,...,4$ contain bounds of auxiliary error part of each variable over each sub-domain. Now the terms involving truncation error has to be estimated in slightly different way as we have done in the previous section. Applying the Cauchy-Schwarz and Young's inequality we have the estimation in the following way:
\begin{equation}
\begin{split}
\mid (\textbf{TE}^{n+1},E^{A,n+1}_{\textbf{U}}) \mid & = \mid (\textbf{TE}^{n+1}, \textbf{e}^{n+1}) \mid + \mid (\textbf{TE}^{n+1}, E^{I,n+1}_\textbf{U}) \mid \\
& \leq \frac{1}{\epsilon_2'} \|\textbf{TE}^{n+1}\|^2 + \frac{\epsilon_2'}{2} (\|\textbf{e}^{n+1}\|^2 + \|E^{I,n+1}_{\textbf{U}} \|^2) \\
& \leq  \frac{1}{\epsilon_2'} \|\textbf{TE}^{n+1}\|^2 +  \frac{\epsilon_2'}{2} \{ \|\textbf{e}^{n+1}\|^2 + h^4 (\frac{1+\theta}{2} \|\textbf{U}^{n+1}\|_2 + \frac{1-\theta}{2} \|\textbf{U}^n\|_2)^2 \} \\
& \leq \frac{1}{\epsilon_2'} \|\textbf{TE}^{n+1}\|^2 +  \frac{\epsilon_2'}{2} \|\textbf{e}^{n+1}\|^2_1 +  h^4 \frac{\epsilon_2'}{2} \bar{C}_4
\end{split}
\end{equation}
In the next step we have carried out the estimation of the remaining terms $RHS(R)$ in the same way as we have proceeded in the earlier section during the proof of Theorem 2. Hence skipping the repetition the estimated result is:
\begin{equation}
\begin{split}
RHS(R) & \leq (C_1'+C_2' \mu_u)\|e^{n+1}_{\textbf{u}}\|_1^2
\end{split}
\end{equation}
where $C_i'$ for i=1,2 are parameters dependent upon the computed solutions.
Now we combine the estimated results from (61) to (68) in (60) and multiplying both sides by $2 dt$ we sum up  for the time steps for $n=0,...,(N-1)$ to reach at the following:
\begin{multline}
\rho \sum_{n=0}^{N-1} ( \|e^{n+1}_{\textbf{u}}\|^2-\|e^{n}_{\textbf{u}}\|^2)+\sum_{n=0}^{N-1} ( \|e^{n+1}_c\|^2-\|e^{n}_c\|^2)+ (2 \eta_l-C_1'-C_2'\eta_s-\epsilon_1' \mid \tau_2 \mid- \epsilon_2')\\
 \sum_{n=0}^{N-1} \|e^{n+1}_{\textbf{u}}\|_1^2 dt+ (2D_l-\epsilon_2') \sum_{n=0}^{N-1} \mid e^{n+1}_{c}\mid_1^2 dt+ (2 \alpha-\epsilon_2') \sum_{n=0}^{N-1} \|e^{n+1}_{c}\|^2 dt \\
 \leq h^2 \sum_{n=0}^{N-1}( \bar{C}_1 \|\textbf{R}_{1h}^{n+1}\| + \frac{ \mid \tau_2 \mid}{ \epsilon_1' } \tilde{C}_1^2+  h^2 \frac{\epsilon_2'}{2} \bar{C}_4) dt + \mid \tau_1 \mid  \sum_{n=0}^{N-1} \sum_{i=1}^4 \tilde{C}_1^i \|\textbf{R}_{1h}^{n+1} \| dt +\\
 \quad   \mid \tau_3 \mid  \sum_{n=0}^{N-1} \sum_{i=1}^4 \tilde{C}_3^i \|R_{3h}^{n+1} \| dt +\frac{1}{\epsilon_2'}  \sum_{n=0}^{N-1} \|\textbf{TE}^{n+1}\|^2 dt \hspace{20 mm}
\end{multline}
We now choose the arbitrary parameters in such a way that all the coefficients in the left hand side can be made positive.
Then taking minimum over the coefficients in the left hand side let us divide both sides by them. Using property (11) associated with the implicit time discretisation scheme and the fact that $\tau_1, \tau_3$ are of order $h^2$, we have arrived at the following relation:
\begin{equation}
\boxed{\|\textbf{u}-\textbf{u}_h\|_{\bar{\textbf{P}}}^2  + \|c-c_h\|_{\bar{\textbf{P}}}^2 \leq C'(\textbf{R}_h) (h^2+dt^{2})}
\end{equation}
This only completes one part of $aposteriori$ estimation and in the next part we combine the corresponding pressure part.\vspace{2mm}\\
\textbf{Second part:} Using (50) we can rewrite (49) in the following form
\begin{equation}
\begin{split}
b(\textbf{v}_h, I^h_p p-p_h) & = (\frac{\partial e_{\textbf{u}}}{\partial t}, \textbf{v}_h) + a^F_{TL}(\textbf{u},\textbf{u},\textbf{v}_h)
-a^F_{TL}(\textbf{u}_h,\textbf{u}_h,\textbf{v}_h) + a_{NL}^F(\mu ;\textbf{u},\textbf{v}_h) \\
& \quad - a_{NL}^F(\mu_h ;\textbf{u}_h,\textbf{v}_h)
\end{split}
\end{equation}
Now we discretize with respect to time and apply the $Cauchy-Schwarz$ inequality, the $Young$s inequality, property \textbf{(b)} of the $trilinear$ form $a_{TL}^F(\cdot, \cdot, \cdot)$ and the above result (70) subsequently during the process of derivation we finally have,
\begin{equation}
\sum_{n=0}^{N-1}  b(\textbf{v}_h, E_p^{A,n+1}) dt \leq \bar{C}'(\textbf{R}_h)(h^2+ dt^{2}) \|\textbf{v}_h\|_1
\end{equation}
Using this estimate equation (51) becomes
\begin{equation}
\|I_hp-p_h\|^2_{L^2(L^2)} \leq \bar{C}''(\textbf{R}_h)(h^2+ dt^{2})
\end{equation} 
Now combining the results obtained in the first and second part and applying the interpolation estimate (27) on pressure interpolation term $E^I_p$, we finally arrive at the following
\begin{equation}
\boxed{\|\textbf{u}-\textbf{u}_h\|^2_{\bar{\textbf{P}}} + \|p-p_h\|_{\bar{\textbf{Q}}}^2  + \|c-c_h\|^2_{\bar{\textbf{P}}} \leq \bar{C}(\textbf{R}_h) (h^2+ dt^{2})}
\end{equation} 
Now this finally completes derivation of $aposteriori$ error estimation. 
\end{proof}
\begin{remark}
These estimations clearly imply that the scheme is $first$ order convergent in space with respect to total norm, whereas in time it is $first$ order convergent for backward Euler time discretization scheme.
\end{remark}

\begin{figure}[htbp]
    \centering 
\begin{subfigure}{0.33\textwidth}
  \includegraphics[width=\linewidth]{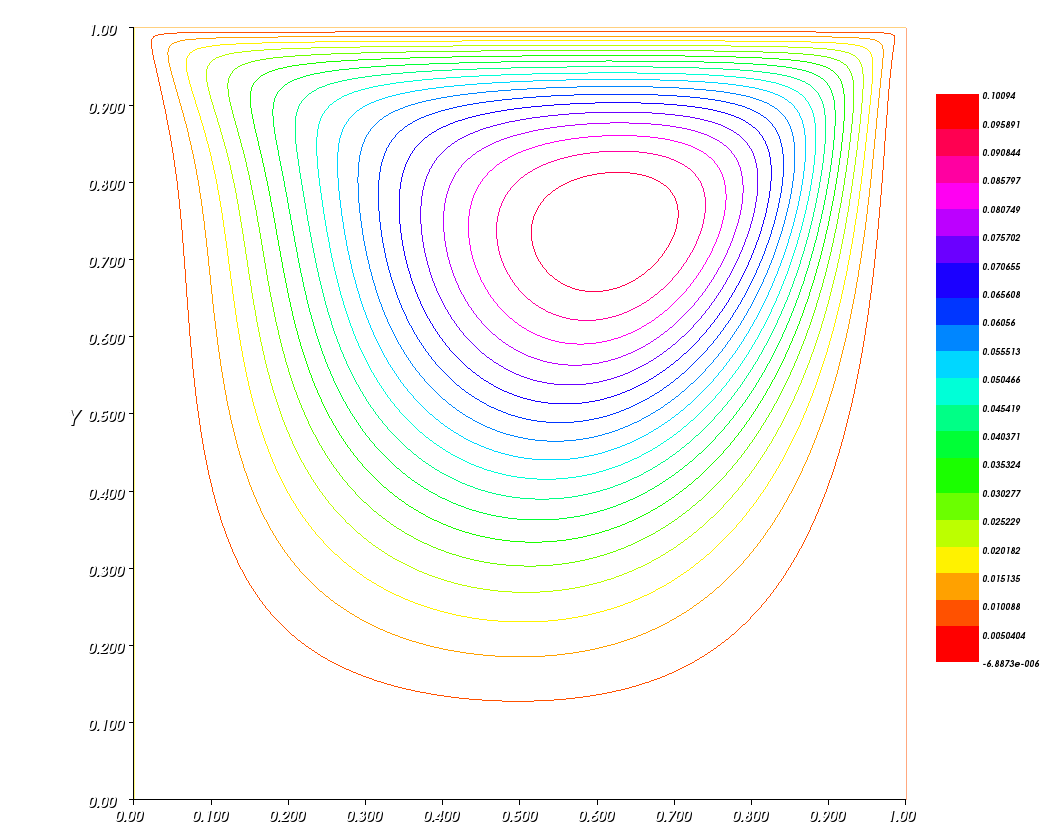}
  \caption{Re=100}
  \label{fig:1}
\end{subfigure}\hfil 
\begin{subfigure}{0.33\textwidth}
  \includegraphics[width=\linewidth]{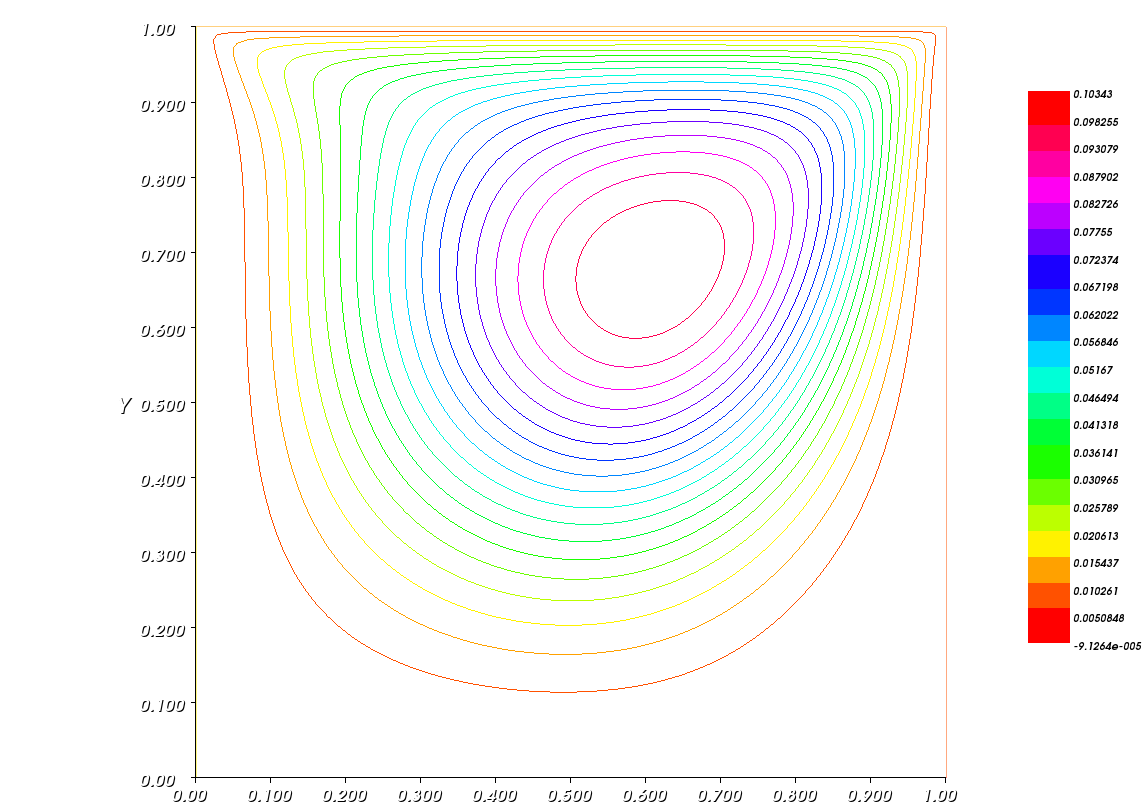}
  \caption{Re=200}
  \label{fig:2}
\end{subfigure}\hfil 
\begin{subfigure}{0.33\textwidth}
  \includegraphics[width=\linewidth]{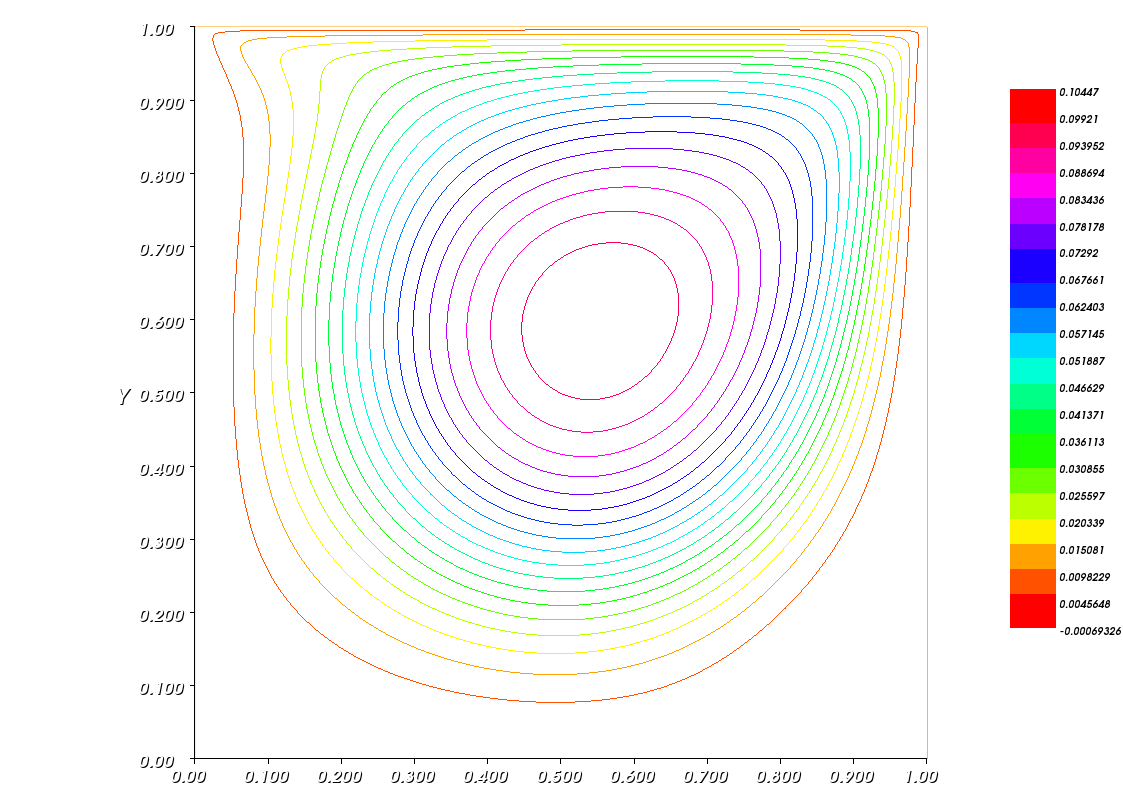}
  \caption{Re=500}
  \label{fig:3}
\end{subfigure}

\medskip
\begin{subfigure}{0.33\textwidth}
  \includegraphics[width=\linewidth]{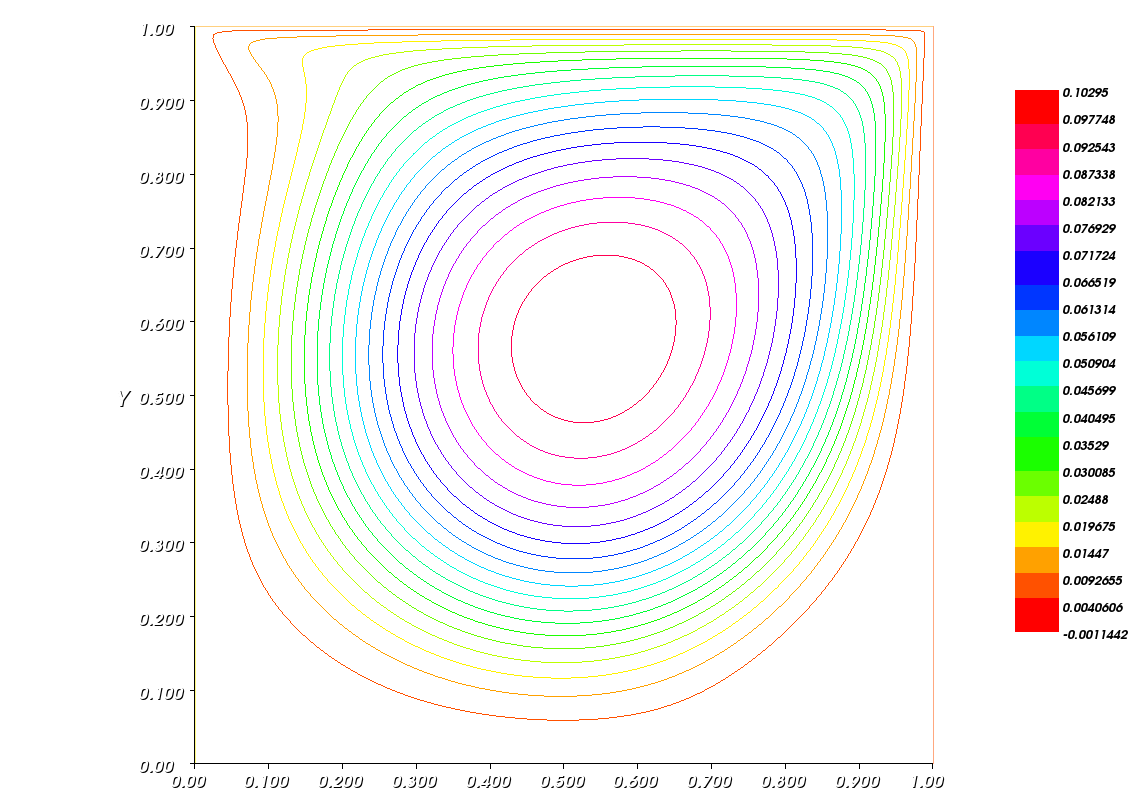}
  \caption{Re=800}
  \label{fig:4}
\end{subfigure}\hfil 
\begin{subfigure}{0.33\textwidth}
  \includegraphics[width=\linewidth]{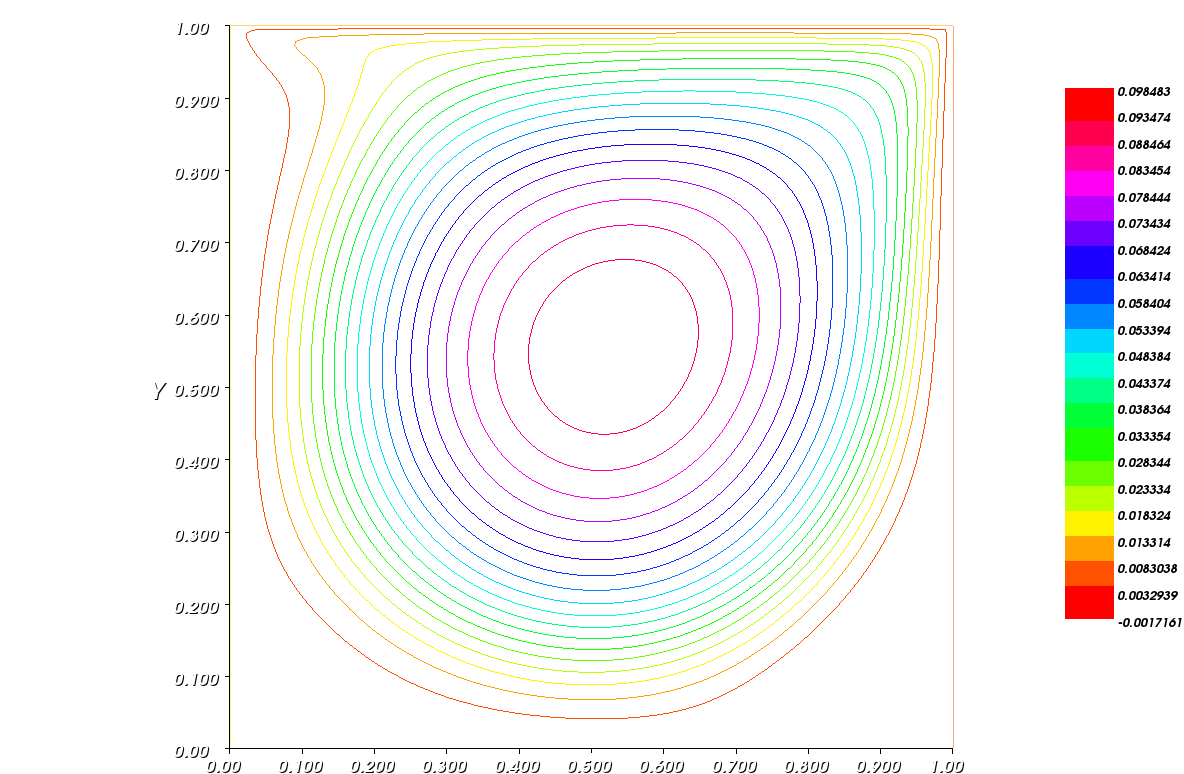}
  \caption{Re=1500}
  \label{fig:5}
\end{subfigure}\hfil 
\begin{subfigure}{0.33\textwidth}
  \includegraphics[width=\linewidth]{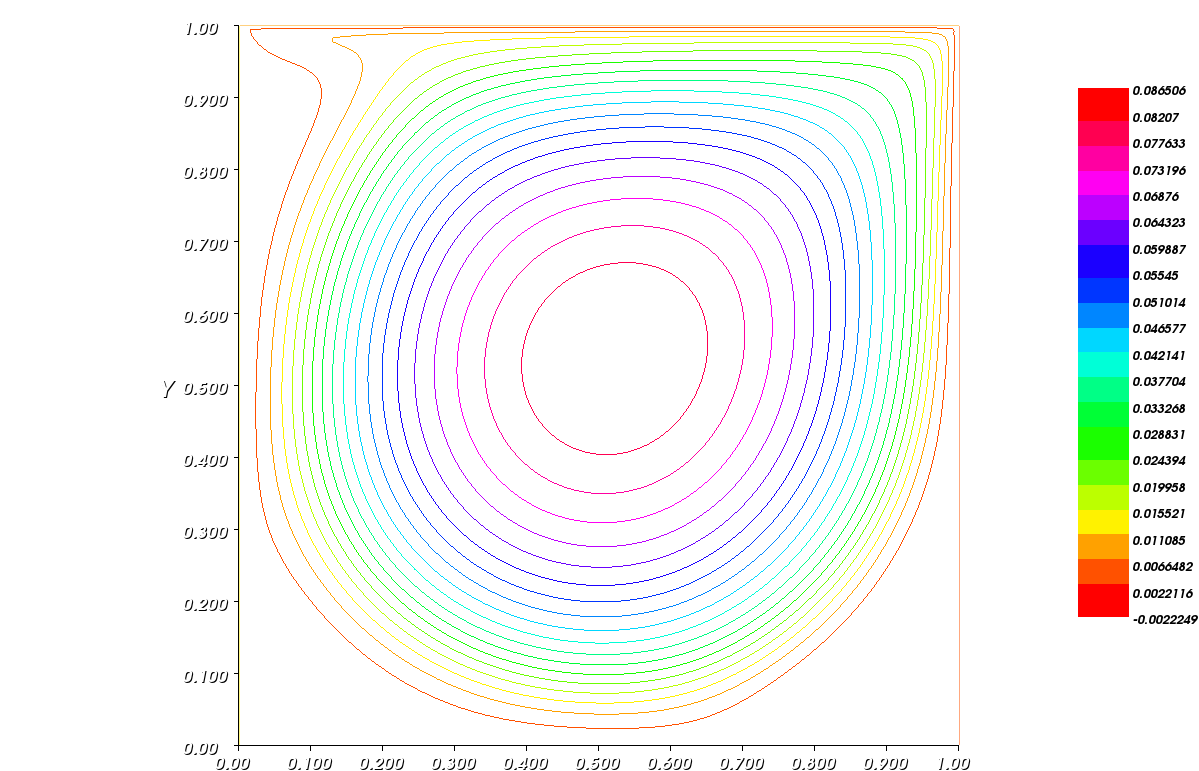}
  \caption{Re=4000}
  \label{fig:6}
\end{subfigure}
\caption{Single lid driven cavity flow streamline results for present method}
\label{fig:images}
\end{figure}

\begin{figure}[htbp]
    \centering 
\begin{subfigure}{0.49\textwidth}
  \includegraphics[width=6.5cm, height=5.5cm]{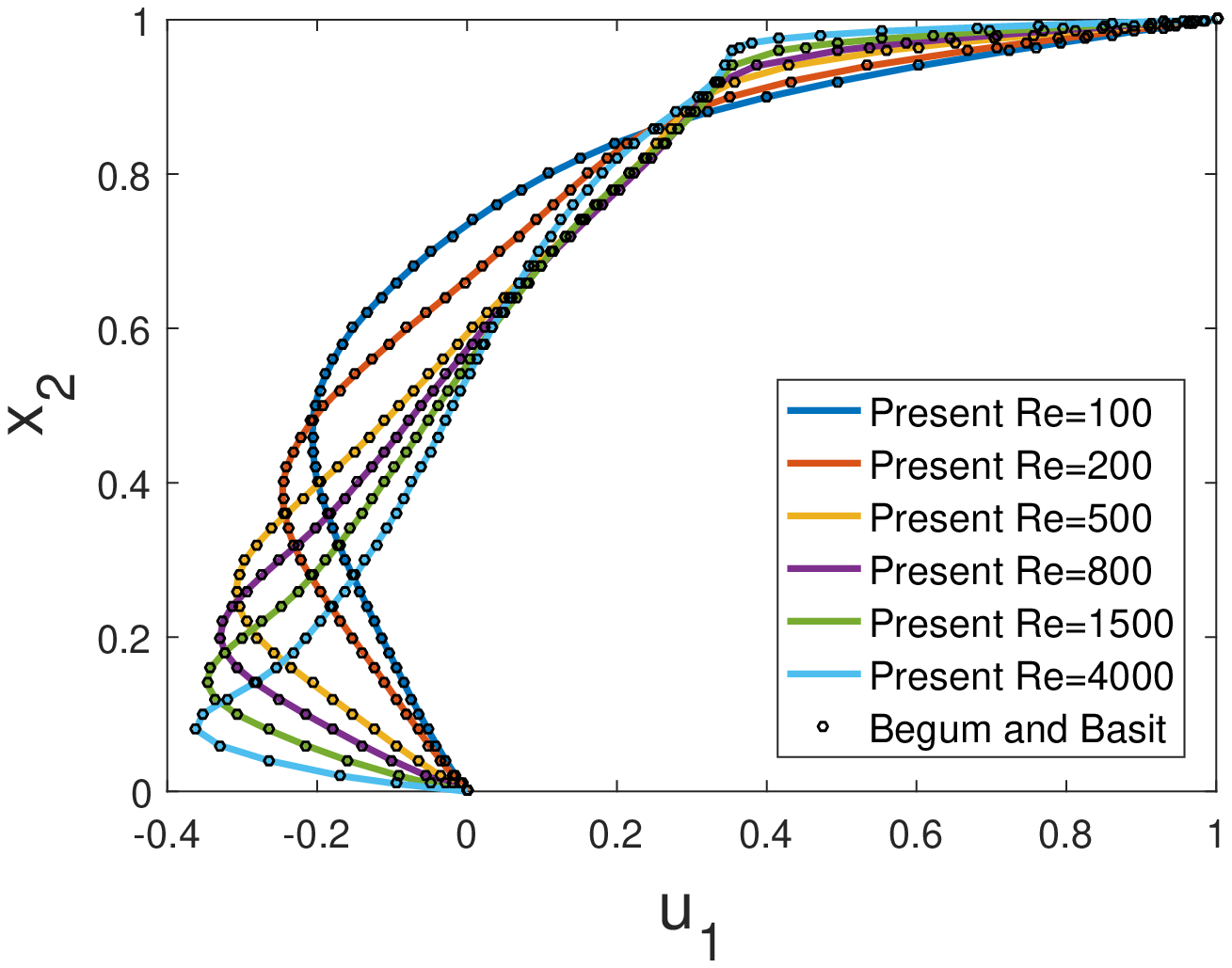}
  \label{fig:1}
\end{subfigure}\hfil 
\begin{subfigure}{0.49\textwidth}
  \includegraphics[width=6.5cm, height=5.5cm]{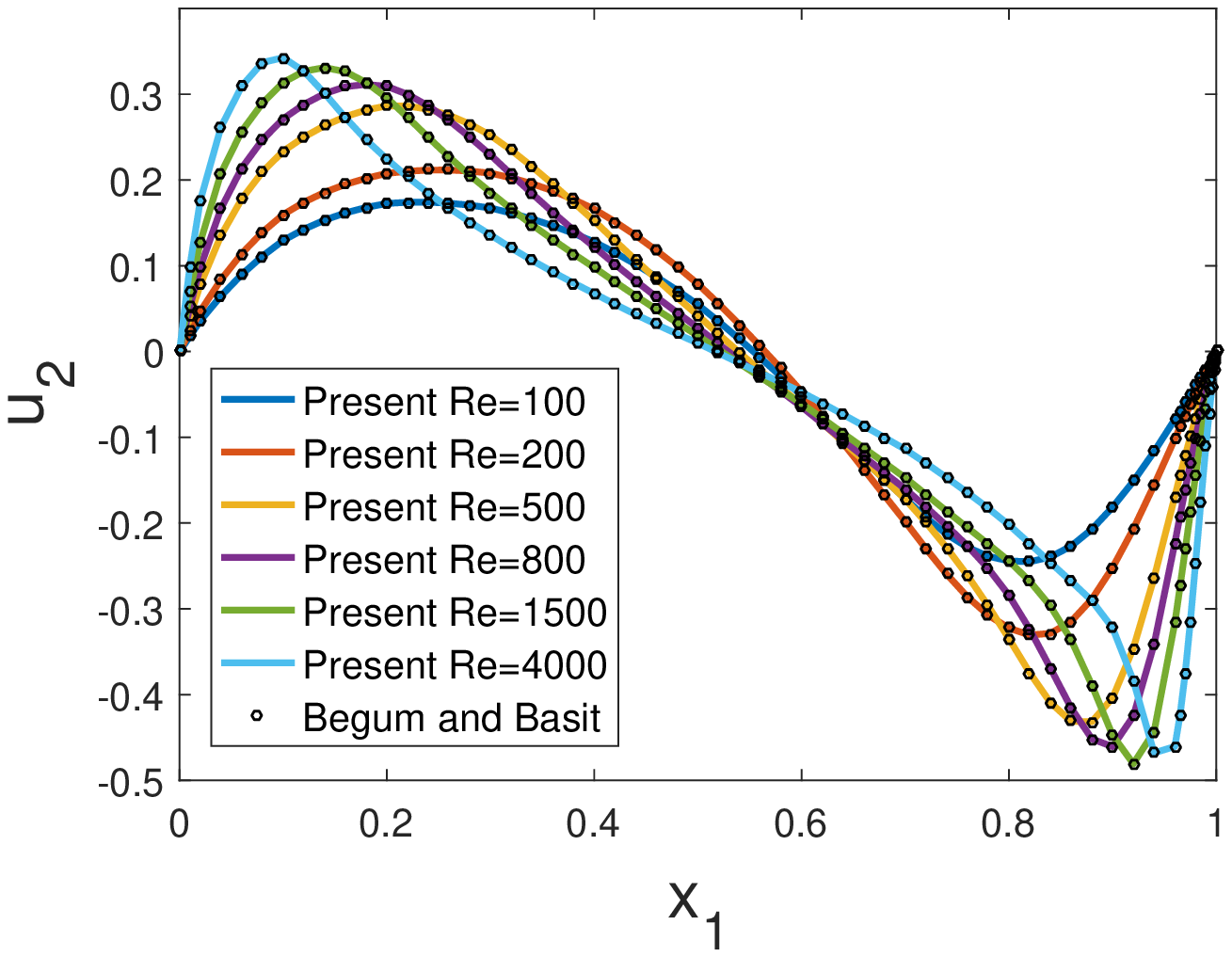}
  \label{fig:2}
\end{subfigure}\hfil 
\caption{Comparison of concentration error $(e_{c})$ and pressure error ($e_p$) plots respectively }
\label{fig:images}
\end{figure}

\section{Numerical experiments}
\subsection{Validation: Lid driven cavity problem}
The preformance of the present method is validated through this driven cavity benchmark problem, where the results are compared with the studies of Begum and Basit in \cite{13}. We have tested for single lid driven square (0,1) $\times$ (0,1) cavity problem with unit horizontal velocity at upper most wall and zero at the remaining boundary walls. As well as vanishing vertical velocities are considered at all the walls of the boundary. The experiments are performed at 256 $\times$ 256 grids for the Reynolds numbers, $Re$ =100, 200, 500, 800, 1500, 4000.
\subsubsection*{Results and discussion}
Figure 1 presents the driven cavity streamline plots generated for $Re$= 100, 200, 500, 800, 1500, 4000 respectively. A standard downward shifting pattern of the primary vortex can be seen in the figures for increasing $Re$ values. Besides figure 2 exhibits the graphical comparison of the horizontal and vertical velocity plots with \cite{13} for wide range of $Re$. All these results indicate equally well performance of the present method and validates its numerical accuracy.

\subsection{Convergence experiments}
This subsection is dedicated in verifying the order of convergence results experimentally for different test cases which include experiments with different $Re$ values, variable viscosity and diffusion coefficients. These experiments are carried out on a bounded square domain $\Omega$ (0,1) $\times$ (0,1) with homogeneous Dirichlet boundary condition. Piecewise linear functions are considered for approximating all three variables. The exact solutions for all the experiments are: \\
$\textbf{u}=(e^{-t}x^2(x-1)^2y(y-1)(2y-1),-e^{-t}x(x-1)(2x-1)y^2(y-1)^2)$ \vspace{1mm}\\
$p= e^{-t}(3x^2+3y^2-2)$ and $c=e^{-t}xy(x-1)(y-1)$\vspace{1mm}\\
Here we mention the error notations corresponding to each variable in the following:\\
Let $e_{\textbf{u}}$= $\sqrt{\|\textbf{u}-\textbf{u}_h\|_{\bar{\textbf{P}}}^2}$, $e_{p}$= $\sqrt{\|p-p_h\|_{\bar{\textbf{Q}}}^2}$ and $e_{c}$= $\sqrt{\|c-c_h\|_{\bar{\textbf{P}}}^2}$ where the norms are already defined in section 3 and the term $total$ specifies the additive result of all these errors.

\begin{table}[]
\centering
\begin{tabular}{|p{7mm}|p{3mm}|p{3mm}|p{10mm}|p{7mm}|p{10mm}|p{7mm}|p{10mm}|p{7mm}|p{10mm}|p{7mm}|}
    \hline
Re & $dt$ & $\frac{1}{h}$ & $e_{\textbf{u}}$ & RoC & $e_c$& RoC & $e_p$ & RoC &Total & RoC\\ [1mm]
 \hline
& $\frac{1}{10}$& 10 & 4.48$e^{-3}$ &  & 8.53$e^{-3}$ & & 1.58$e^{-1}$& &1.59$e^{-1}$ & \\[1mm]
100 & $ \frac{1}{20}$&   20 & 1.86$e^{-3}$ & 1.268  & 4.75$e^{-3}$ & 0.843 & 8.33$e^{-2}$ & 0.928 & 8.34$e^{-2}$  &0.927\\  [1mm]
& $  \frac{1}{40}$&   40 & 8.68$e^{-4}$ & 1.098  & 2.53$e^{-3}$ & 0.908 &  4.31$e^{-2}$ & 0.951& 4.31$e^{-2}$  &0.951 \\   [1mm]
& $ \frac{1}{80}$ &  80 & 3.84$e^{-4}$ & 1.177  & 1.31$e^{-3}$ & 0.956 &2.19$e^{-2}$& 0.973 & 2.20$e^{-2}$ &0.973\\ 
    \hline    
  &  $\frac{1}{10}$& 10 & 5.55$e^{-3}$ &  & 8.53$e^{-3}$ & & 1.58$e^{-1}$& &1.59$e^{-1}$ & \\[1mm]

 & $ \frac{1}{20}$&   20 & 2.70$e^{-3}$ & 1.039  & 4.75$e^{-3}$ & 0.843 & 8.32$e^{-2}$ & 0.927 & 8.34$e^{-2}$  & 0.927\\  [1mm]
500 & $  \frac{1}{40}$&   40 & 1.33$e^{-3}$ & 1.017  & 2.53$e^{-3}$ & 0.908 &  4.30$e^{-2}$ & 0.951 & 4.31$e^{-2}$  & 0.951 \\   [1mm]
& $ \frac{1}{80}$ &  80 & 5.69$e^{-4}$ & 1.162  & 1.30$e^{-3}$ & 0.956 & 2.19$e^{-2}$& 0.973 & 2.20$e^{-2}$ & 0.973 \\ 
\hline
& $\frac{1}{10}$& 10 & 6.48$e^{-3}$ &  & 8.53$e^{-3}$ & & 1.58$e^{-1}$& &1.59$e^{-1}$ & \\[1mm]
 & $ \frac{1}{20}$&   20 & 3.79$e^{-3}$ & 0.643  & 4.75$e^{-3}$ & 0.843 & 8.33$e^{-2}$ & 0.928 & 8.35$e^{-2}$  & 0.927\\  [1mm]
5000 & $  \frac{1}{40}$&   40 & 2.02$e^{-3}$ & 0.819  & 2.53$e^{-3}$ & 0.908 &  4.31$e^{-2}$ & 0.951 & 4.32$e^{-2}$  & 0.951 \\   [1mm]
& $ \frac{1}{80}$ &  80 & 1.02$e^{-3}$ & 0.773  & 1.31$e^{-3}$ & 0.956 & 2.19$e^{-2}$& 0.973 & 2.20$e^{-2}$ & 0.973	 \\ 
    \hline    
    & $\frac{1}{10}$& 10 & 6.48$e^{-3}$ &  & 8.53$e^{-3}$ & & 1.58$e^{-1}$& &1.59$e^{-1}$ & \\[1mm]
 & $ \frac{1}{20}$&   20 & 3.79$e^{-3}$ & 0.643  & 4.75$e^{-3}$ & 0.843 & 8.33$e^{-2}$ & 0.928 & 8.35$e^{-2}$  & 0.927\\  [1mm]
10000 & $  \frac{1}{40}$&   40 & 2.02$e^{-3}$ & 0.819  & 2.53$e^{-3}$ & 0.908 &  4.31$e^{-2}$ & 0.951 & 4.32$e^{-2}$  & 0.951 \\   [1mm]
& $ \frac{1}{80}$ &  80 & 1.02$e^{-3}$ & 0.773  & 1.31$e^{-3}$ & 0.956 & 2.19$e^{-2}$& 0.973 & 2.20$e^{-2}$ & 0.973	 \\ 
    \hline    
    & $\frac{1}{10}$& 10 & 6.48$e^{-3}$ &  & 8.53$e^{-3}$ & & 1.58$e^{-1}$& &1.59$e^{-1}$ & \\[1mm]
 & $ \frac{1}{20}$&   20 & 3.79$e^{-3}$ & 0.643  & 4.75$e^{-3}$ & 0.843 & 8.33$e^{-2}$ & 0.928 & 8.35$e^{-2}$  & 0.927\\  [1mm]
50000 & $  \frac{1}{40}$&   40 & 2.02$e^{-3}$ & 0.819  & 2.53$e^{-3}$ & 0.908 &  4.31$e^{-2}$ & 0.951 & 4.32$e^{-2}$  & 0.951 \\   [1mm]
& $ \frac{1}{80}$ &  80 & 1.02$e^{-3}$ & 0.773  & 1.31$e^{-3}$ & 0.956 & 2.19$e^{-2}$& 0.973 & 2.20$e^{-2}$ & 0.973	 \\ 
    \hline    
\end{tabular}
\caption{  Errors and respective order of convergences for $Re$=50 at $T=1$}
    \end{table}

\begin{table}[]
\centering
\begin{tabular}{|p{7mm}|p{3mm}|p{3mm}|p{10mm}|p{7mm}|p{10mm}|p{7mm}|p{10mm}|p{7mm}|p{10mm}|p{7mm}|}
    \hline
Re & $dt$ & $\frac{1}{h}$ & $e_{\textbf{u}}$ & RoC & $e_c$& RoC & $e_p$ & RoC &Total & RoC\\ [1mm]
 \hline
& $\frac{1}{10}$& 10 & 4.48$e^{-3}$ &  & 8.53$e^{-3}$ & & 1.58$e^{-1}$& &1.59$e^{-1}$ & \\[1mm]
100 & $ \frac{1}{20}$&   20 & 1.86$e^{-3}$ & 1.268  & 4.75$e^{-3}$ & 0.843 & 8.33$e^{-2}$ & 0.928 & 8.34$e^{-2}$  &0.927\\  [1mm]
& $  \frac{1}{40}$&   40 & 8.68$e^{-4}$ & 1.098  & 2.53$e^{-3}$ & 0.908 &  4.31$e^{-2}$ & 0.951& 4.31$e^{-2}$  &0.951 \\   [1mm]
& $ \frac{1}{80}$ &  80 & 3.84$e^{-4}$ & 1.177  & 1.31$e^{-3}$ & 0.956 &2.19$e^{-2}$& 0.973 & 2.20$e^{-2}$ &0.973\\ 
    \hline    
\end{tabular}
\caption{  Errors and respective order of convergences for $Re$=50 at $T=1$}
    \end{table}

\subsubsection{Test case 1: Weak coupling}
This coupling indicates viscosity coefficient to be independent of solute concentration. Based on this conception we have further studied this case in terms of following two sub cases. For both the sub cases we have hypothetically considered constant diffusion and reaction coefficients, $D_1$=$D_2$=0.01, $\alpha$=0.01. 
\subsubsection*{Sub case 1:}
We have assumed constant viscosity coefficient in this sub case. Consequently $Re$ has been occurred to determine the nature of the flow. We have carried out convergence tests for particularly five $Re$ values such as 100, 500, 5000, 10000 and 50000 to provide an overall idea about the performance of the present method for small to large Reynolds numbers. 
\subsubsection*{Sub case 2:}
In this sub case we have performed the experiment for a specific viscosity coefficient of Casson fluid. Often the flow of physiological fluids such as blood into the vessels is represented through non-Newtonian Casson model. Consequently the physical properties such as hematocrit, protein content in plasma, vessel diameter etc inevitably affect the fluid viscosity. Few of these influences are reflected through viscosity coefficient \cite{15}, $\mu(J_2)$ = $(k_0(\bar{c})+k_1(\bar{c})J_2^{\frac{1}{4}}) J_2^{-\frac{1}{2}}$,  where $k_0(\bar{c})$ and $k_1(\bar{c})$ can be determined by fitting the curve for shear stress vs viscosity to some physical viscometric data. Here the following values are taken for these parameters from the study  \cite{15}: $k_0(\bar{c})$= 0.1937 $(Pa)^{\frac{1}{2}}$ and $k_1(\bar{c})$= 0.055 (Pa $ s)^{\frac{1}{2}}$. Let us identify this case as 'variable viscosity' in one way coupling results.
\subsubsection*{Results and discussion}
Tables put up the experimentally computed values of errors $e_{\textbf{u}}$, $e_p$, $e_c$ and $total$ with respect to the specified norms and their respective order of convergences for $Re$= 100, 500, 5000, 10000, 50000 and variable viscosity case. These results clearly show that the present method successfully achieved the theoretically established unit rate of convergence for specified norms even for high convective flows. 

\begin{figure}[]
   \centering 
\begin{subfigure}{0.33\textwidth}
  \includegraphics[width=\linewidth]{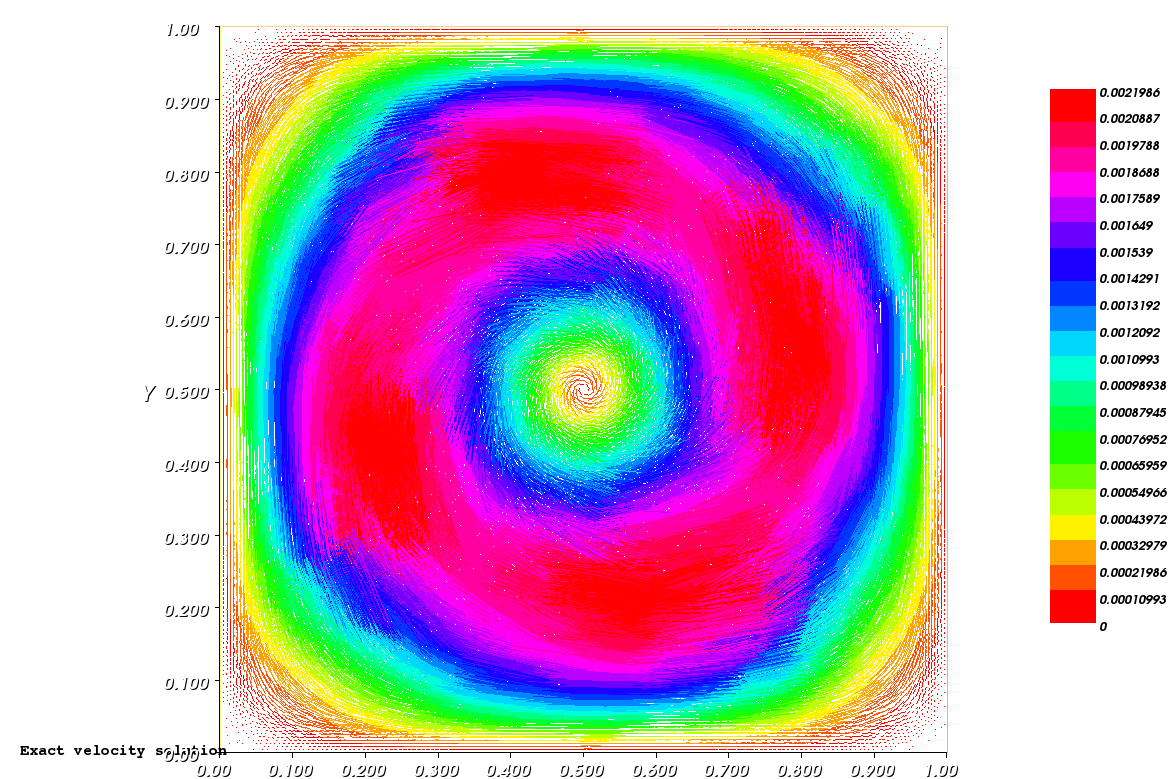}
  \caption{Exact velocity solution}
  \label{fig:1}
\end{subfigure}\hfil 
\begin{subfigure}{0.33\textwidth}
  \includegraphics[width=\linewidth]{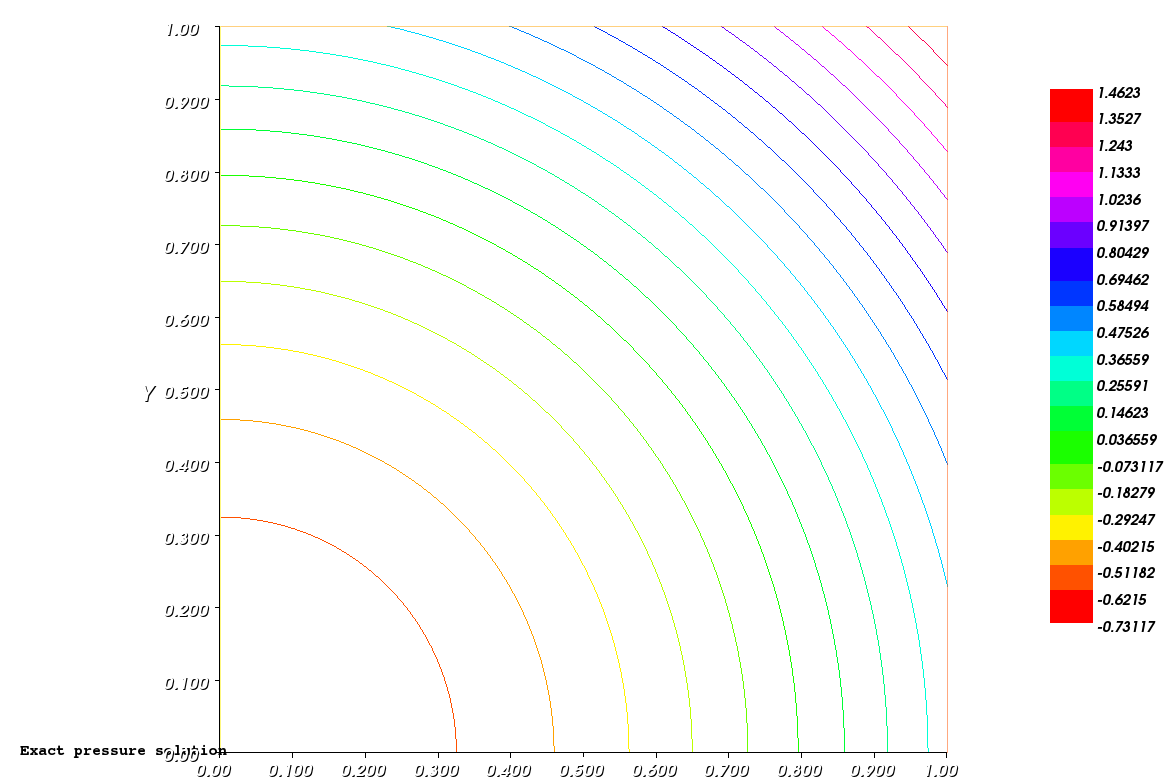}
  \caption{Exact pressure solution}
  \label{fig:2}
\end{subfigure}\hfil 
\begin{subfigure}{0.33\textwidth}
  \includegraphics[width=\linewidth]{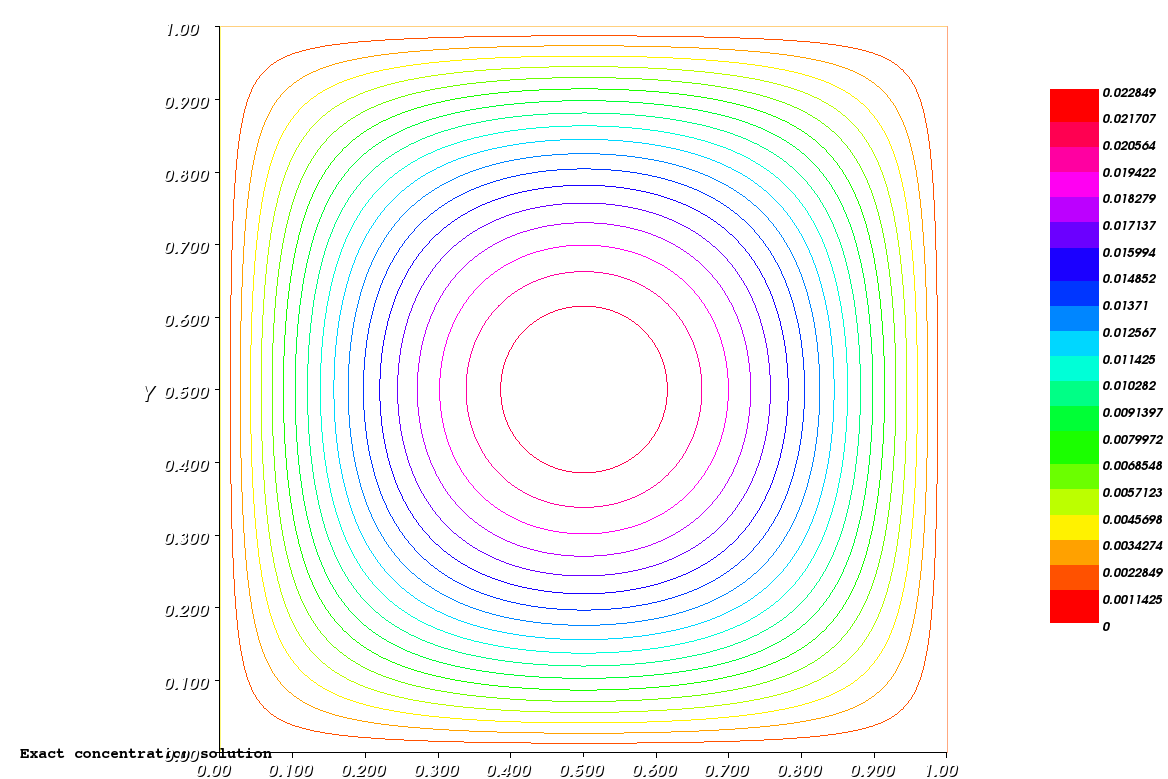}
  \caption{Exact concentration solution}
  \label{fig:3}
\end{subfigure}

\medskip
\begin{subfigure}{0.33\textwidth}
  \includegraphics[width=\linewidth]{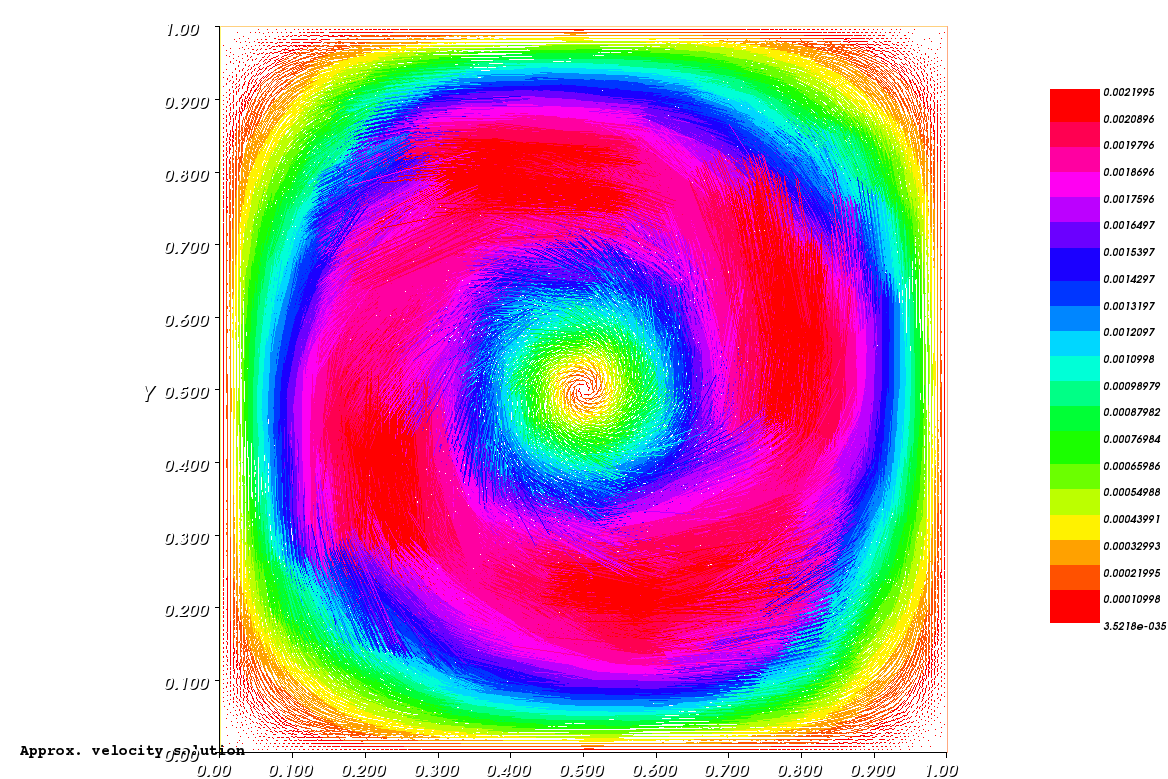}
  \caption{Approximate velocity solution}
  \label{fig:4}
\end{subfigure}\hfil 
\begin{subfigure}{0.33\textwidth}
  \includegraphics[width=\linewidth]{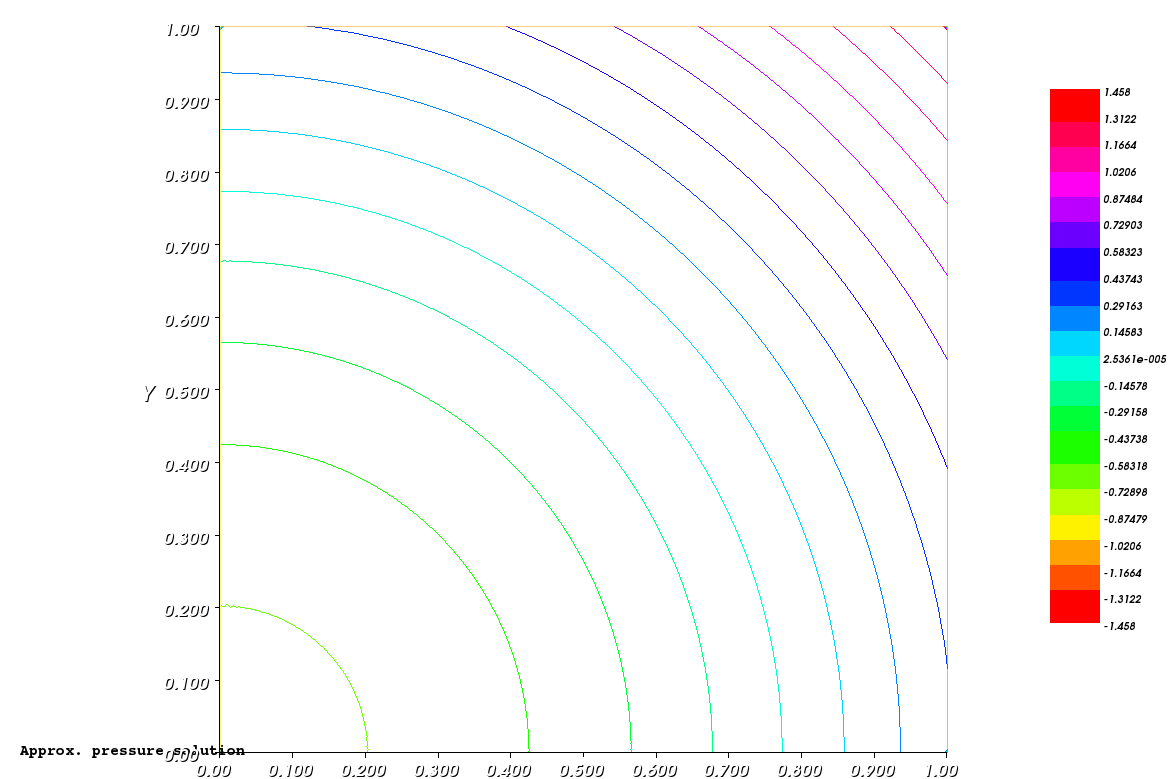}
  \caption{Approximate pressure solution}
  \label{fig:5}
\end{subfigure}\hfil 
\begin{subfigure}{0.33\textwidth}
  \includegraphics[width=\linewidth]{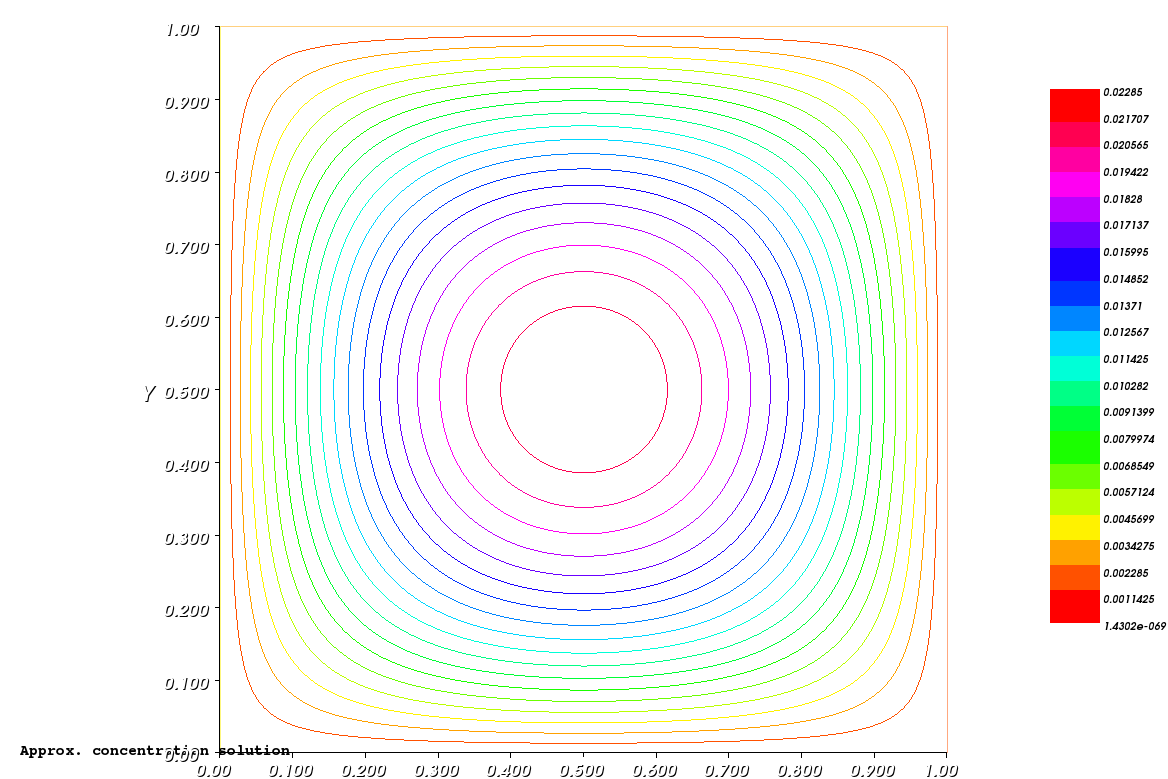}
  \caption{Approximate concentration solution}
  \label{fig:6}
\end{subfigure}
\caption{Comparison between the exact and approximate solution plots at grid size 160 $\times$ 160}
\label{fig:images}
\end{figure}
\subsubsection{Test case 2: Strong coupling}
There are significant research works of practical importance going on in the fields of biomedical engineering, fluid dynamics, chemical engineering with solute concentration dependent viscosity coefficient. Our aim being only to verify the numerical performance of the present method, here we have dealt with two different expression of variable viscosity of some physical relevance. The variable diffusion coefficients are hypothetically chosen as $D_1=e^{-t}y^2(y-1)^2(2y-1)^2x^4(x-1)^4$ and $D_2= e^{-t}x^2(x-1)^2(2x-1)^2y^4(y-1)^4$ in this case. The true solutions remain the same as in the previous experiments.
\subsubsection*{Sub case 1:}
$Antonova$ in \cite{35} has come up with a mathematical expression of blood viscosity representing its rheological behavior during the presence of some other concentrated quantity. For our convergence analysis purpose we have used the expression of the solute concentration dependent viscosity coefficient, $\eta(c)$= $\eta_0 (1+ K c)$, where plasma viscosity $\eta_0$=0.16 $Pa$ $s$, the shape factor of spherical particle $K$=0.25 and $c$ is the concentration of solute. We mark this sub case as 'physiological example' in the strong coupling results.
\subsubsection*{Sub case 2:}
This experiment is motivated by a study on slurry suspension \cite{15}, where $Harrison$ and $Maltman$ has worked with variable viscosity which represents exponential suspension nature of a fluid material during its mixing up into another fluid. The expression of viscosity considered here is $\eta(c)$= $A e^{Bc}$, where the consistency factor $A$=0.129 $Pa$ $s$ and dimensionless constant $B$=0.101. Let us name this case by 'slurry suspension'.
\subsection*{Results and discussion}
The comparison of the exact and approximate solution plots shows that this method approximates all the variables finely for this strongly coupled system too. Table 4 and table 5 clearly indicate that first order convergence have been reached by the method for both the sub cases. The order of convergence plots in figure 6 reassure this fact. All these results establish together the excellent numerical performance of the present method for the coupled system containing concentration dependent viscosity coefficient.


\begin{thebibliography}{00}
\bibitem{29}
Fung, Y.C., Skalak, R.: Biomechanics: mechanical properties of living tissues. J. Appl. Mech. Vol. 49, pp. 464-465, 1982.

\bibitem{30}
Blasco J., Codina R.: Space and time error estimates for a first order, pressure stabilized finite element method for the incompressible Navier-Stokes equations. Applied Numerical Mathematics, Vol. 38, pp. 475-497, 2001.

\bibitem{31}
Bochev P.B., Gunzburger M.D., Shadid J.N.: On inf-sup stabilized finite element methods for transient problems. Comput. Methods Appl. Mech. Engrg., Vol. 193, pp. 1471-1489, 2004.

\bibitem{32}
Rivi$\grave{e}$re, B., Wheeler, M.F.: A Discontinuous Galerkin Method Applied to Nonlinear Parabolic Equations. Cockburn B., Karniadakis G.E., Shu CW. (eds) Discontinuous Galerkin Methods. Lecture Notes in Computational Science and Engineering, Springer, Berlin, Heidelberg, Vol. 11, 2000.

\bibitem{33}
Ern A., Guermond J.L.: Theory and Practice of Finite Elements. Applied Mathematical Sciences,
Vol. 159, 2004.

\bibitem{3}
Chowdhury M., Kumar B.V.R.: On subgrid multiscale stabilized finite element method for advection-diffusion-reaction equation with variable coefficients. Applied Numerical Mathematics, Vol. 150, pp. 576-586, 2020.

\bibitem{34}
Amanbek, Y., Wheeler, M.F.: A priori error analysis for transient problems using Enhanced Velocity approach in the discrete-time setting. Journal of Computational and Applied Mathematics, Vol. 361, pp. 459-471, 2019.

\bibitem{13}
Begum R., Basit M.A.: Lattice Boltzmann method and its applications to fluid flow problems. European Journal of Scientific Research, Vol.22 No.2, pp.216-231, 2008.

\bibitem{15}
Harrison P., Maltman A.J.: Numerical modelling of reverse-density structures in soft non-Newtonian sediments. Subsurface Sediment Mobilization. Geological Society, London, Special Publications, Vol. 216, pp. 35-50, 2003.

\bibitem{35}
Antonova N.: On Some Mathematical Models in Hemorheology. Biotechnology \& Biotechnological Equipment, Vol. 26, pp. 3286-3291, 2012.








































































\end{thebibliography}
\end{document}